\newtheorem{Remark}[theorem]{Remark}
\title{Equivalence between minimal time and minimal norm control problems for the heat equation
}
\author{Shulin Qin\thanks{School of Mathematics and Statistics, Wuhan University, Wuhan, 430072, China (shulinqin@yeah.net).} \and Gengsheng Wang\thanks{School of Mathematics and Statistics, Computational Science Hubei Key Laboratory, Wuhan University, Wuhan, 430072, China (wanggs62@yeah.net).  The author was partially supported by the National Natural Science Foundation of China  under grant 11571264. }
}
\begin{document}

\maketitle

\begin{abstract}
This paper presents the equivalence between
minimal time and minimal norm control problems for internally controlled heat equations. The target  is an arbitrarily fixed bounded, closed and convex set with a nonempty interior in the state space. This study differs from [G. Wang and E. Zuazua, \textit{On the equivalence of minimal time and minimal norm controls for internally controlled heat equations}, SIAM J. Control Optim., 50 (2012), pp. 2938-2958] where the target set is the origin in the state space. When the target set is the origin
or a ball, centered at the origin, the minimal norm and the minimal time functions are continuous and strictly decreasing, and they are inverses of each other. However, when the target is located in other place of the state space,  the minimal norm function may be no longer monotonous and the range of the minimal time function may not be connected. These cause the main difficulty in our study. We overcome this difficulty by borrowing some idea from the classical raising sun lemma  (see, for instance,  Lemma 3.5 and Figure 5 on Pages 121-122 in [E. M. Stein and R. Shakarchi, \textit{Real Analysis: Measure Theory, Integration, and Hilbert Spaces}, Princeton University Press, 2005]).
\end{abstract}

\begin{keywords}
Equivalence, minimal time control, minimal norm control,  heat equations
\end{keywords}

\begin{AMS}
49K20, 93C20
\end{AMS}

\pagestyle{myheadings}
\thispagestyle{plain}
\markboth{SHULIN QIN AND GENGSHENG WANG}{EQUIVALENCE OF MINIMAL TIME AND NORM CONTROLS}

\section{Introduction}

Let $\Omega\subset \mathbb R^n$ $(n\in\mathbb N^+)$ be a bounded open domain with a $C^2$ boundary $\partial\Omega$. Let $\omega\subset\Omega$ be an open and nonempty subset with the characteristic function  $\chi_\omega$. Write $\mathbb R^+ \triangleq (0, +\infty)$. Consider the following two controlled heat equations:
\begin{eqnarray}\label{heat-eq-infty}
\left\{\begin{array}{lll}
        \partial_t y-\Delta y=\chi_\omega u  & \mbox{in} &\mathbb R^+\times\Omega,\\
        y=0& \mbox{on} &\mathbb R^+\times\partial\Omega,\\
        y(0)=y_0& \mbox{in} &\Omega
       \end{array}
\right.
\end{eqnarray}
and
\begin{eqnarray}\label{heat-eq-finite}
\left\{\begin{array}{lll}
        \partial_t y-\Delta y=\chi_\omega v  & \mbox{in} &\mathbb (0,T)\times\Omega,\\
        y=0& \mbox{on} &(0,T)\times\partial\Omega,\\
        y(0)=y_0& \mbox{in} &\Omega.
       \end{array}
\right.
\end{eqnarray}
Here, $T>0$, $y_0\in L^2(\Omega)$ and the controls $u$ and $v$ are  taken  from the spaces $L^\infty(\mathbb R^+; L^2(\Omega))$ and $L^\infty(0, T; L^2(\Omega))$, respectively. Denote by $y(\cdot; y_0, u)$ and $\hat y(\cdot; y_0, v)$ the solutions of (\ref{heat-eq-infty}) and (\ref{heat-eq-finite}), respectively. Throughout this paper,  $\|\cdot\|$ and $\langle\cdot,\cdot\rangle$ denote the usual norm and inner product of $L^2(\Omega)$, respectively.

Write $\mathcal{F}$ for the set consisting of all bounded closed convex subsets which have nonempty interiors in $L^2(\Omega)$.
For each $M \geq 0$, $y_0\in L^2(\Omega)$ and $Q \in \mathcal{F}$, we define the following minimal time control problem:
\begin{eqnarray}\label{TP}
 (TP)^{M,y_0}_{Q}: \;\; T(M,y_0,Q) \triangleq \inf\big\{\hat t\geq 0~:~\exists\,u\in\mathcal U^M \mbox{ s.t. } y(\hat t;y_0,u)\in Q \big\},
\end{eqnarray}
where
\begin{eqnarray*}
 \mathcal U^M\triangleq \big\{ u\in L^\infty(\mathbb R^+; L^2(\Omega))~:~ \|u(t)\|\leq M \mbox{ a.e. } t\in\mathbb R^+  \big\}.
\end{eqnarray*}
For  each $T > 0$, $y_0 \in L^2(\Omega)$ and $Q \in \mathcal{F}$, we define the following minimal norm control problem:
\begin{eqnarray}\label{I-1}
 (NP)^{T,y_0}_{Q}: \;\; N(T,y_0,Q)\triangleq \inf \big\{  \|v\|_{L^\infty(0,T;L^2(\Omega))}
 ~:~\hat y(T;y_0,v)\in Q  \big\}.
\end{eqnarray}
In these two problems, $Q$ and $y_0$ are called the target set and the initial state, respectively.
To avoid the triviality of these problems, we  often assume that
\begin{eqnarray}\label{y0-ball}
  y_0 \in L^2(\Omega)\setminus Q.
\end{eqnarray}

\begin{definition}\label{w-definition1.1}
(i) In Problem $(TP)^{M,y_0}_{Q}$, $T(M,y_0,Q)$ is called the minimal time;  $u\in\mathcal U^M$  is called an admissible control  if $y(\hat t;y_0,u)\in Q$ for some $\hat t \geq 0$; $u^*\in\mathcal U^M$ is called a minimal time control if  $T(M,y_0,Q)< +\infty$ and $y(T(M,y_0,Q);y_0,u^*)\in Q$. (ii) When $(TP)^{M,y_0}_{Q}$ has no admissible control,   $T(M,y_0,Q) \triangleq +\infty$. (iii) If the restrictions of all minimal time controls to $(TP)^{M,y_0}_{Q}$ over  $\big(0,T(M,y_0,Q)\big)$ are the same,
 then the minimal time control to this problem is said to be unique.
 (iv)  In Problem $(NP)^{T,y_0}_{Q}$, $N(T,y_0,Q)$ is called the minimal norm;  $v\in L^\infty(0,T;L^2(\Omega))$ is called an admissible control  if $\hat y(T;y_0,v)\in Q$; $v^*$ is called a minimal norm control  if
 $ \hat{y}(T;y_0,v^*)\in Q$  and $\|v^*\|_{L^\infty(0,T;L^2(\Omega))}=N(T,y_0,Q)$. (v)  The functions $M\rightarrow T(M,y_0,Q)$ and $T\rightarrow N(T,y_0,Q)$
are called the minimal time function and the minimal norm function, respectively.
\end{definition}

  In this paper, we aim to build up an equivalence between minimal time and minimal norm control problems in the sense of the following Definition~\ref{Def-1}.
  \begin{definition}\label{Def-1}
     Problems $(TP)^{M,y_0}_{Q}$ and $(NP)^{T,y_0}_{Q}$  are said to be equivalent if the following three conditions hold:
    (i) $(TP)^{M,y_0}_{Q}$ and $(NP)^{T,y_0}_{Q}$ have minimal time and minimal norm controls, respectively;
    (ii) The restriction of each minimal time control to $(TP)^{M,y_0}_{Q}$ over $(0,T)$ is a minimal norm control to $(NP)^{T,y_0}_{Q}$;
    (iii) The zero extension of each  minimal norm control to $(NP)^{T,y_0}_{Q}$ over  $\mathbb{R}^+$ is a minimal time control to $(TP)^{M,y_0}_{Q}$.
  \end{definition}

The  maim result of this paper is the following Theorem~\ref{theorem1.5-new}.

\begin{theorem}\label{theorem1.5-new}
Let $y_0 \in L^2(\Omega)$ and $Q \in \mathcal{F}$ satisfy (\ref{y0-ball}).
Write
 \begin{eqnarray}
(\mathcal GT)_{y_0,Q} &\triangleq&
\left\{ (M,T) \in [0,+\infty)\times (0,+\infty)\;:\; T=T(M,y_0,Q) \right\},
\label{def-GT}
\\
 (\mathcal KN)_{y_0,Q} &\triangleq&
 \left\{(M,T)  \in [0,+\infty)\times(0,+\infty) ~:~  M=0,\; N(T,y_0,Q)=0 \right\}.
 \label{def-KN}
\end{eqnarray}
Then the following conclusions are true:

\noindent(i) When $(M,T)\in (\mathcal GT)_{y_0,Q} \setminus (\mathcal KN)_{y_0,Q}$, problems $(TP)^{M,y_0}_{Q}$ and $(NP)^{T,y_0}_{Q}$  are equivalent and  the null controls (over $\mathbb{R}^+$ and $(0,T)$, respectively) are not the minimal time control and the minimal norm control to these two problems, respectively.

\noindent(ii) When $(M,T)\in (\mathcal KN)_{y_0,Q}$, problems $(TP)^{M,y_0}_{Q}$ and $(NP)^{T,y_0}_{Q}$ are equivalent and  the null controls (over $\mathbb{R}^+$ and $(0,T)$, respectively)
are the unique minimal time control and the unique minimal norm control to these two problems, respectively.

\noindent(iii) When $(M,T)\in [0,+\infty) \times (0,+\infty) \setminus\big( (\mathcal GT)_{y_0,Q} \cup (\mathcal KN)_{y_0,Q} \big)$, problems $(TP)^{M,y_0}_{Q}$ and $(NP)^{T,y_0}_{Q}$   are not equivalent.

\end{theorem}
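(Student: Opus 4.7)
The plan is to treat the three cases of Theorem~\ref{theorem1.5-new} separately. Case (ii) is a direct verification: the hypothesis $(M,T)\in(\mathcal{KN})_{y_0,Q}$ forces $M=0$, hence $\mathcal U^0=\{0\}$, so the null control is the only element of $\mathcal U^0$; simultaneously $N(T,y_0,Q)=0$ forces $\hat y(T;y_0,0)\in Q$, so $T(0,y_0,Q)\le T<+\infty$ and the null control is a minimal time control. The same observation makes zero the unique minimal norm control of $(NP)^{T,y_0}_{Q}$, and all three items of Definition~\ref{Def-1} hold trivially.

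Case (i) is the heart of the matter. First I would establish existence of a minimal time control $u^*$ for $(TP)^{M,y_0}_{Q}$ and of a minimal norm control $v^*$ for $(NP)^{T,y_0}_{Q}$ by standard direct methods: weak-$*$ compactness of bounded sets in $L^\infty(\mathbb R^+;L^2(\Omega))$, weak-$*$ lower semicontinuity of the $L^\infty$-norm, weak continuity of the end-state map, and closedness and convexity of $Q$, combined with extraction along a minimizing time sequence $t_k\downarrow T(M,y_0,Q)$. Next, since $T=T(M,y_0,Q)$ and $(M,T)\notin(\mathcal{KN})_{y_0,Q}$, necessarily $M>0$ (else the null control would be admissible for $(TP)^{0,y_0}_{Q}$, giving $\hat y(T;y_0,0)\in Q$ and placing $(M,T)$ in $\mathcal{KN}$). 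The end state $y(T;y_0,u^*)$ then lies on $\partial Q$, a supporting-hyperplane separation produces a nonzero terminal adjoint, and unique continuation for the heat equation yields the bang-bang representations
\[
u^*(t)=M\,\frac{\chi_\omega\varphi(t)}{\|\chi_\omega\varphi(t)\|},\qquad v^*(t)=N(T,y_0,Q)\,\frac{\chi_\omega\psi(t)}{\|\chi_\omega\psi(t)\|}\quad\mbox{a.e. }t\in(0,T),
\]
where $\varphi,\psi$ solve the backward heat equation from supporting terminal data at $y(T;y_0,u^*)$.

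The crux is to prove $N(T,y_0,Q)=M$. The inequality $\le$ is immediate since $u^*|_{(0,T)}$ is admissible for $(NP)^{T,y_0}_{Q}$. For the reverse inequality the classical argument (using strict monotonicity of $N(\cdot,y_0,Q)$ when $Q=\{0\}$) breaks down, and the raising-sun lemma enters: I would decompose $(0,+\infty)$ into the ``shadow'' set $E=\{t:\exists\,s>t\text{ with }N(s,y_0,Q)<N(t,y_0,Q)\}$, an open disjoint union of maximal intervals $(a_k,b_k)$ with $N(a_k,y_0,Q)=N(b_k,y_0,Q)$, and its complement, on which $N(\cdot,y_0,Q)$ is nondecreasing. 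Supposing $N(T,y_0,Q)<M$, the decomposition supplies a time $T'\le T$ and a control $v'$ with $\|v'\|_\infty<M$ steering $y_0$ into $Q$ at time $T'$; a truncation-in-time argument, continuity of the trajectory map, and $\mathrm{int}\,Q\ne\emptyset$ then produce a control in $\mathcal U^M$ reaching $Q$ at some time \emph{strictly smaller} than $T$, contradicting $T=T(M,y_0,Q)$. Once $N(T,y_0,Q)=M$ is secured, the two bang-bang formulas identify $u^*|_{(0,T)}$ with $v^*$ (the adjoints $\varphi,\psi$ differ by a positive scalar), yielding items (ii) and (iii) of Definition~\ref{Def-1}.

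For case (iii) I would argue by contradiction: assume equivalence with $(M,T)\notin(\mathcal{GT})_{y_0,Q}\cup(\mathcal{KN})_{y_0,Q}$. Then $T(M,y_0,Q)<+\infty$ (else no minimal time control exists, violating item (i) of Definition~\ref{Def-1}), and $T\ne T(M,y_0,Q)$. If $T<T(M,y_0,Q)$, any minimal norm control $v^*$ must satisfy $\|v^*\|_\infty>M$ (otherwise its zero extension lies in $\mathcal U^M$ and reaches $Q$ by time $T<T(M,y_0,Q)$, contradicting the definition of $T(M,y_0,Q)$), so this zero extension cannot be a minimal time control, violating item (iii) of equivalence. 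If $T>T(M,y_0,Q)$, take a minimal time control extended as zero after $T(M,y_0,Q)$; its restriction to $(0,T)$ then evolves freely from $y(T(M,y_0,Q);y_0,u^*)\in\partial Q$ over an interval of length $T-T(M,y_0,Q)>0$, and combining case (i) applied at $(M,T(M,y_0,Q))\in(\mathcal{GT})_{y_0,Q}$ with the raising-sun description of $N(\cdot,y_0,Q)$ on $[T(M,y_0,Q),T]$ shows this restriction cannot be a minimal norm control for $(NP)^{T,y_0}_{Q}$ unless $(M,T)\in(\mathcal{KN})_{y_0,Q}$, contradiction. The main obstacle throughout is the raising-sun step in case (i); once the non-monotonicity of $N(\cdot,y_0,Q)$ is tamed, the remainder is standard Pontryagin-type analysis for parabolic optimal control.
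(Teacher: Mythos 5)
Your overall skeleton matches the paper's: existence by direct methods, the key identity $N(T(M,y_0,Q),y_0,Q)=M$ obtained from continuity of the minimal norm function together with a sun-lemma-type description of $T(M,y_0,Q)$ as a first hitting time (the paper's Theorem~\ref{Theorem-dis-TP-NP} and Proposition~\ref{Proposition-N-T}), the trivial verification in case (ii), and a contradiction structure in case (iii). But the decisive sub-case of (iii) has a genuine gap as written. When $T(M,y_0,Q)<T$, you extend one minimal time control by zero and appeal to ``case (i) at $(M,T(M,y_0,Q))$ combined with the raising-sun description of $N(\cdot,y_0,Q)$ on $[T(M,y_0,Q),T]$.'' On that interval the minimal norm function is unconstrained --- it may dip to zero and rise again, which is precisely the phenomenon exhibited in Theorem~\ref{theorem4-3} --- so no statement about its profile there rules out the configuration $M>0$ with $N(T,y_0,Q)=0$; yet by \eqref{def-KN} you must force $M=0$ to land in $(\mathcal KN)_{y_0,Q}$ and obtain your contradiction. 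The paper closes this with a concrete device your sketch lacks: it builds \emph{two} minimal time controls, one with zero tail and one with tail $M\hat z/\|\hat z\|$ on $(T(M,y_0,Q),+\infty)$; equivalence makes both restrictions minimal norm controls, and the bang-bang property (Theorem~\ref{Lemma-bang-bang-NP}) applied on the tail interval yields $N(T,y_0,Q)=0$ and $N(T,y_0,Q)=M$ simultaneously, whence $M=N(T,y_0,Q)=0$. (Your single zero-tail control can be salvaged --- bang-bang gives $N(T,y_0,Q)=0$, so the null control is the unique minimal norm control, item (iii) of Definition~\ref{Def-1} then forces $e^{\Delta T(M,y_0,Q)}y_0\in Q$, and Proposition~\ref{Proposition-N-T}(ii) gives $M=0$ --- but some such extra step is mandatory and is absent.)

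Two further soft spots in case (i). First, identifying $u^*|_{(0,T)}$ with $v^*$ via bang-bang representations whose ``adjoints differ by a positive scalar'' is unjustified (nothing shows the two supporting functionals are proportional, nor do you establish a maximum principle for the time problem) and, more importantly, unnecessary: Definition~\ref{Def-1} does not ask a specific pair of controls to coincide. Once $N(T,y_0,Q)=M$ is known, the restriction of \emph{any} minimal time control is admissible for $(NP)^{T,y_0}_{Q}$ with norm at most $M=N(T,y_0,Q)$, hence minimal, and the zero extension of \emph{any} minimal norm control reaches $Q$ at time $T=T(M,y_0,Q)$ within $\mathcal U^M$, hence time-minimal --- this is exactly the paper's two-line argument, with no bang-bang input in case (i). Second, your crux inequality $N(T,y_0,Q)\geq M$ presupposes continuity of $t\mapsto N(t,y_0,Q)$ (the shadow-set decomposition requires it), but that continuity is itself a substantive lemma --- the paper's Theorem~\ref{Theorem-Lip-NP}, proved by explicit $L^\infty$ null-controllability constructions --- which your proposal assumes silently; and with continuity in hand the full shadow decomposition is overkill, since continuity at points just left of $T$ plus the definition of $T(M,y_0,Q)$ as an infimum already yields the contradiction, which is how Proposition~\ref{Proposition-N-T}(ii) is derived from Theorem~\ref{Theorem-dis-TP-NP}. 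Your case (ii) and the sub-case $T<T(M,y_0,Q)$ of case (iii) are correct and agree with the paper.
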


Several notes are given in order.

(a)  Minimal time control problems and minimal norm control problems are two kinds of important optimal control problems in control theory. The equivalence between these two kinds of problems  plays an important role in the studies of these problems. To our best knowledge, in the existing  literatures on such equivalence (see, for instance, \cite{HOF,GL,WXZ,WangZhang,WZ,Y}), the target sets  are either the origin or balls, centered at the origin, in the state spaces. In \cite{HOF}, the author studied these two problems under an abstract framework where the target set is a point and controls enter the system globally. (This corresponds to the case that  $\omega=\Omega$.) The author
proved that the time optimality implies the norm optimality (see \cite[Theorem 2.1.2]{HOF}). It seems for us that the equivalence of these problems, where the target sets are arbitrary bounded closed convex sets with nonempty interiors in the state spaces and $\omega$ is a proper subset of $\Omega$, has not been touched upon. (At least, we did not find any such literature.)

 (b) In the case when the target set is $\{0\}$ and the initial state $y_0$ satisfies $y_0\neq 0$, the minimal norm and
 minimal time  functions  are  continuous and strictly decreasing from $\mathbb{R}^+$ onto $\mathbb{R}^+$, and they are inverses of each other (see  \cite[Theorem 2.1]{WZ}). With the aid of these  properties, the desired equivalence was   built up in \cite[Theorem 1.1]{WZ}. Besides, these properties imply that  $(\mathcal GT)_{y_0,Q}$ is connected and $(\mathcal KN)_{y_0,Q}=\emptyset$ (in the case that the target set is \{0\}).

    However, we will see from Theorem~\ref{theorem4-3} that for some $y_0$ and  $Q$ satisfying (\ref{y0-ball}), the minimal norm function is no longer  decreasing (correspondingly,  the minimal time function is not continuous). Indeed, it is proved in our paper that for some $ y_0$ and $Q$ satisfying (\ref{y0-ball}),
    the minimal norm function is not decreasing, $(\mathcal GT)_{y_0,Q}$ is not connected and $(\mathcal KN)_{y_0,Q}\neq\emptyset$ (see Theorem~\ref{theorem4-3}). These are the main differences of the current problems from those in \cite{WZ}. Such differences cause the main difficulty in the studies of the equivalence.

(c) We overcome the above-mentioned difficulty through building up  a new connection   between the minimal time function and the  minimal norm function (see Theorem \ref{Theorem-dis-TP-NP}). With the aid of this connection, as well as the continuity of the minimal norm function (see Theorem \ref{Theorem-Lip-NP}), we proved Theorem \ref{theorem1.5-new}. In the building of the above-mentioned new connection, we
     borrowed some idea from the classical raising sun lemma  (see, for instance,  Lemma 3.5 and Figure 5 on Pages 121-122 in \cite{EMStein}).

(d) About  works on  minimal time and minimal norm control problems, we would like to mention the references
\cite{ANJP, VB, carja-1, carja-2, HOF1, GY, KITO, KW1, KL1, KL2, LW, QL, SIMT, PZ, TWW, FTr, Wang, WX-1, WX-2, WCZ, WZheng, CZ-1, CZ-2, Guo-Zh} and the references therein.

The rest of the paper is organized as follows: Section 2 proves
 the main result. Section 3 provides an example where
  the minimal norm function is not decreasing, $(\mathcal GT)_{y_0,Q}$ is not connected and $(\mathcal KN)_{y_0,Q}\neq\emptyset$.

\section{Proof of the main theorem}
In this section, we first show some properties on the problem $(NP)^{T,y_0}_{Q}$; then study some
properties on the minimal norm and minimal time functions; finally give the proof of Theorem \ref{theorem1.5-new}.

\subsection{Some properties of  $(NP)^{T,y_0}_{Q}$}
In this subsection, we will present the existence of minimal norm controls and the bang-bang property for $(NP)^{T,y_0}_{Q}$.

\begin{theorem}\label{Lemma-exist-op-NP}
Given $T>0$, $y_0\in L^2(\Omega)$ and $Q\in \mathcal{F}$,  the problem $(NP)^{T,y_0}_{Q}$ has at least one minimal norm control.
\end{theorem}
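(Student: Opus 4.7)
The plan is to apply the direct method in the calculus of variations, exploiting sequential weak-star compactness of norm-bounded subsets of $L^{\infty}(0,T;L^{2}(\Omega))$ together with the linearity of the control-to-state map and the closedness and convexity of $Q$. The argument splits into four steps: establish feasibility, extract a weak-star limit of a minimizing sequence, pass to the limit in the state equation, and apply weak-star lower semicontinuity of the $L^{\infty}$-norm.

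First I would verify that $N(T,y_{0},Q)<+\infty$. Since $Q\in\mathcal{F}$ has nonempty interior, I choose $y_{1}\in\mathrm{int}(Q)$ and $\varepsilon>0$ with $\{y\in L^{2}(\Omega):\|y-y_{1}\|<\varepsilon\}\subset Q$. By the classical approximate controllability of the internally controlled heat equation (first with $L^{2}$-controls, then with $L^{\infty}$-controls via the density of $L^{\infty}(0,T;L^{2}(\Omega))$ in $L^{2}(0,T;L^{2}(\Omega))$ and continuity of the control-to-state map), the reachable set $\{\hat y(T;y_{0},v):v\in L^{\infty}(0,T;L^{2}(\Omega))\}$ is dense in $L^{2}(\Omega)$. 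Hence there exists $v_{0}\in L^{\infty}(0,T;L^{2}(\Omega))$ with $\hat y(T;y_{0},v_{0})\in Q$, yielding $N(T,y_{0},Q)\leq \|v_{0}\|_{L^{\infty}(0,T;L^{2}(\Omega))}<+\infty$.

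Next I would take a minimizing sequence $\{v_{n}\}\subset L^{\infty}(0,T;L^{2}(\Omega))$ with $\hat y(T;y_{0},v_{n})\in Q$ for each $n$ and $\|v_{n}\|_{L^{\infty}(0,T;L^{2}(\Omega))}\to N(T,y_{0},Q)$. Because $L^{2}(\Omega)$ is reflexive (hence has the Radon--Nikodym property) and $L^{1}(0,T;L^{2}(\Omega))$ is separable, one has $L^{1}(0,T;L^{2}(\Omega))^{*}\cong L^{\infty}(0,T;L^{2}(\Omega))$, and the sequential Banach--Alaoglu theorem yields a subsequence, still denoted $\{v_{n}\}$, with $v_{n}\stackrel{\ast}{\rightharpoonup}v^{*}$ in $L^{\infty}(0,T;L^{2}(\Omega))$. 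Using the Duhamel representation
\begin{equation*}
\hat y(T;y_{0},v)=S(T)y_{0}+\int_{0}^{T}S(T-s)\chi_{\omega}v(s)\,ds,
\end{equation*}
where $S(\cdot)$ denotes the Dirichlet heat semigroup, and testing against an arbitrary $\varphi\in L^{2}(\Omega)$, I note that $s\mapsto \chi_{\omega}S(T-s)\varphi$ lies in $L^{1}(0,T;L^{2}(\Omega))$; hence weak-star convergence of $\{v_{n}\}$ forces $\hat y(T;y_{0},v_{n})\rightharpoonup \hat y(T;y_{0},v^{*})$ weakly in $L^{2}(\Omega)$. Since $Q$ is convex and closed it is weakly closed, so $\hat y(T;y_{0},v^{*})\in Q$, i.e.\ $v^{*}$ is admissible. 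Weak-star lower semicontinuity of the $L^{\infty}$-norm then gives $\|v^{*}\|_{L^{\infty}(0,T;L^{2}(\Omega))}\leq \liminf_{n}\|v_{n}\|_{L^{\infty}(0,T;L^{2}(\Omega))}=N(T,y_{0},Q)$, while admissibility forces the reverse inequality, so $v^{*}$ is a minimal norm control.

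The main obstacle is the feasibility step: producing at least one admissible $v_{0}$ driving $y_{0}$ into $Q$ at time $T$ uses in an essential way both the nonemptiness of $\mathrm{int}\,Q$ and the approximate controllability of the heat equation (null controllability alone would only handle the case $0\in Q$). The remainder is a routine direct-method argument, since the control-to-state map is affine linear and weak-star-to-weak continuous, and $Q$ is closed and convex (hence weakly closed).
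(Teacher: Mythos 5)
Your proposal is correct and follows essentially the same route as the paper: the paper also obtains an admissible control from the approximate controllability of the heat equation (using the nonempty interior of $Q$) and then invokes the ``standard way'' (the direct method, as in the proof of Fattorini's Lemma 1.1) for existence, which is exactly the weak-star compactness, weak closedness of $Q$, and lower semicontinuity argument you spell out. Your write-up simply makes explicit the details (the duality $L^1(0,T;L^2(\Omega))^*\cong L^\infty(0,T;L^2(\Omega))$ and the weak-star-to-weak continuity of the control-to-state map) that the paper leaves to the cited reference.
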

\begin{proof}
Since $Q$ has a nonempty interior, it follows from the approximate controllability for the heat equation (see
\cite[Theorem 1.4]{FZ}) that $(NP)^{T,y_0}_{Q}$ has at least one admissible control. Then by the standard way (see, for instance, the proof of \cite[Lemma 1.1]{HOF1}), one can easily prove the existence of minimal norm controls to this problem. This ends the proof.
\end{proof}

The bang-bang property of $(NP)^{T,y_0}_{Q}$ can be directly derived from the Pontryagin maximum principle and the unique continuation for heat equations built up in \cite{Lin} (see also \cite{AWZ} and \cite{PZ}). Since we do not find the exact references on its proof, for the sake of the completeness of the paper, we give the proof here.

\begin{theorem}\label{Lemma-bang-bang-NP}
For each  $T>0$, $y_0\in L^2(\Omega)$ and $Q \in \mathcal{F}$, the problem $(NP)^{T,y_0}_{Q}$ holds the bang-bang property, i.e., every minimal norm control $v^*$ to $(NP)^{T,y_0}_{Q}$ satisfies that $\|v^*(t)\|=N(T,y_0,Q)$ a.e. $t\in(0,T)$.
\end{theorem}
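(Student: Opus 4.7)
The plan is to combine a Hahn-Banach separation argument (yielding a Pontryagin-type maximum principle for $(NP)^{T,y_0}_Q$) with a backward unique continuation result for the heat equation to force the norm of a minimal norm control to be constant almost everywhere.

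First, one may assume $N := N(T,y_0,Q) > 0$; otherwise $v^* \equiv 0$ is the only admissible control with that norm, and the conclusion holds trivially. Write the state as $\hat y(T;y_0,v) = S(T)y_0 + \int_0^T S(T-s)\chi_\omega v(s)\,ds$, where $S(\cdot)$ is the Dirichlet heat semigroup. Consider the reachable set
\[
R(N) \;\triangleq\; \bigl\{\hat y(T;y_0,v) : v \in L^\infty(0,T;L^2(\Omega)),\ \|v\|_{L^\infty(0,T;L^2(\Omega))} \le N\bigr\},
\]
which is convex in $L^2(\Omega)$. By minimality of $N$, the point $\hat y(T;y_0,v^*) \in R(N)\cap Q$ lies on the boundary of $R(N)$ (otherwise a small perturbation would permit a control of strictly smaller norm still steering $y_0$ into $Q$). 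Apply the Hahn-Banach theorem to separate $\hat y(T;y_0,v^*)$ from the interior of $R(N)$: there exists $z \in L^2(\Omega)$, $z\ne 0$, with
\[
\bigl\langle z,\hat y(T;y_0,v^*)\bigr\rangle \;\ge\; \bigl\langle z,\hat y(T;y_0,v)\bigr\rangle \quad\text{for all } v \text{ with } \|v\|_{L^\infty(0,T;L^2(\Omega))}\le N.
\]

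Let $\varphi$ solve the backward adjoint heat equation $-\partial_t\varphi-\Delta\varphi=0$ in $(0,T)\times\Omega$ with $\varphi=0$ on the lateral boundary and $\varphi(T)=z$. Using self-adjointness of $S$, the separation inequality rewrites as
\[
\int_0^T \bigl\langle v^*(t),\chi_\omega\varphi(t)\bigr\rangle\,dt \;\ge\; \int_0^T \bigl\langle v(t),\chi_\omega\varphi(t)\bigr\rangle\,dt
\]
for every admissible $v$ with $\|v\|_\infty\le N$. A standard pointwise-maximization argument (localizing to arbitrary measurable subsets of $(0,T)$) then yields
\[
\bigl\langle v^*(t),\chi_\omega\varphi(t)\bigr\rangle \;=\; \max_{\|w\|\le N}\langle w,\chi_\omega\varphi(t)\rangle \;=\; N\,\|\chi_\omega\varphi(t)\| \quad\text{for a.e. } t\in(0,T).
\]
Consequently, on the set $E \triangleq \{t\in(0,T):\chi_\omega\varphi(t)\ne 0\}$, the control is forced into the form $v^*(t) = N\,\chi_\omega\varphi(t)/\|\chi_\omega\varphi(t)\|$, hence $\|v^*(t)\|=N$.

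The main obstacle (and the step where Lin's unique continuation enters) is showing that $(0,T)\setminus E$ has Lebesgue measure zero. Suppose not; then $\chi_\omega\varphi(t)=0$ on a set of positive measure, i.e.\ $\varphi$ vanishes on $F\times\omega$ for some $F\subset(0,T)$ of positive measure. Since the map $t\mapsto\varphi(t,\cdot)\in L^2(\Omega)$ is real analytic on $(0,T)$ (by the analyticity of the heat semigroup), the $L^2(\omega)$-valued analytic function $t\mapsto\chi_\omega\varphi(t,\cdot)$ vanishes on a set with an accumulation point, and hence on all of $(0,T)$. Thus $\varphi\equiv 0$ on $(0,T)\times\omega$, and Lin's backward unique continuation property for heat equations (as in \cite{Lin}; see also \cite{AWZ,PZ}) yields $\varphi\equiv 0$ on $(0,T)\times\Omega$, contradicting $z=\varphi(T)\ne 0$. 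This completes the argument.
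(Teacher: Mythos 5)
Your overall architecture (separation giving a maximum principle, then unique continuation forcing $\|v^*(t)\|=N$ a.e.) is the right one, and your final two steps --- the pointwise-maximization argument and the analyticity-plus-Lin unique continuation argument showing $\chi_\omega\varphi(t)\neq 0$ --- are correct and essentially match the paper. But the separation step itself has a genuine gap. You propose to separate the point $\hat y(T;y_0,v^*)$ from ``the interior of $R(N)$.'' In $L^2(\Omega)$ the set $R(N)$ has \emph{empty} interior: writing $\int_0^{T}S(T-s)\chi_\omega v(s)\,ds=S(\delta)\int_0^{T-\delta}S(T-\delta-s)\chi_\omega v(s)\,ds+\int_{T-\delta}^{T}S(T-s)\chi_\omega v(s)\,ds$, the first term ranges over a precompact set (compactness of $S(\delta)$) and the second has norm at most $N\delta$, so $R(N)$ is totally bounded, hence a compact convex subset of the infinite-dimensional space $L^2(\Omega)$. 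Consequently ``$\hat y(T;y_0,v^*)$ lies on the boundary of $R(N)$'' is vacuously true and carries no information, and the geometric form of Hahn--Banach cannot be invoked as you state it: there is no open convex set to separate from, and a boundary point of a compact convex set in infinite dimensions need not be a support point at all (e.g., $0$ in the Hilbert cube $\{x\in\ell^2:|x_n|\le 1/n\}$), so your parenthetical minimality remark does not produce the functional $z$. The existence of a supporting functional at the optimal final state is precisely what has to be proved, and minimality of $N$ alone, applied inside $R(N)$, does not give it.

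The missing idea --- and the route the paper takes --- is to use the nonempty interior of the \emph{target} rather than of the reachable set. One first shows $R(N)\cap Q\neq\emptyset$ and $R(N)\cap Q\subset\partial Q$: if some admissible $v_2$ with $\|v_2\|_{L^\infty(0,T;L^2(\Omega))}\le N$ gave $\hat y(T;y_0,v_2)\in \mbox{Int}\,Q$, then (since $N>0$ forces $e^{\Delta T}y_0\notin Q$, hence $v_2\neq 0$) the scaled control $\lambda v_2$ with $\lambda\in(0,1)$ close to $1$ would still steer $y_0$ into $Q$ with strictly smaller norm, contradicting the optimality of $N$. Since $R(N)$ and $Q$ are convex, $\mbox{Int}\,Q\neq\emptyset$, and $R(N)\cap\mbox{Int}\,Q=\emptyset$, Hahn--Banach separation of the two convex sets yields $\eta^*\neq 0$ with $\sup_{z\in R(N)}\langle z,\eta^*\rangle=\inf_{w\in Q}\langle w,\eta^*\rangle$, and since $\hat y(T;y_0,v^*)\in R(N)\cap Q$ this supremum is attained there; from that point on your argument goes through verbatim. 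So the repair is local but essential: replace your point-versus-$R(N)$ separation by the $\partial Q$ claim (with the scaling argument, which also uses $e^{\Delta T}y_0\notin Q$, a fact your write-up never records) and the $R(N)$-versus-$\mbox{Int}\,Q$ separation.
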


\begin{proof}
Arbitrarily fix $T>0$, $y_0\in L^2(\Omega)$ and $Q \in \mathcal{F}$. There are only two possibilities on $N(T,y_0,Q)$: either $N(T,y_0,Q)=0$ or $N(T,y_0,Q)>0$. In the first case, it follows from Theorem~\ref{Lemma-exist-op-NP} and the definition of the minimal norm control (see (iv) in Definition~\ref{w-definition1.1}) that the null control is the unique minimal norm control to $(NP)^{T,y_0}_{Q}$. So this problem holds the bang-bang property in the first case.

In the second case that $  N(T,y_0,Q)>0$, we have that $e^{\Delta T}y_0 \notin Q$. Let
\begin{eqnarray}\label{bangbang-0}
 \mathcal A_{T} \triangleq \big\{ \hat y(T;y_0,v)\in L^2(\Omega) ~:~ \|v\|_{L^\infty(0,T;L^2(\Omega))}\leq N(T,y_0,Q)\big\}.
\end{eqnarray}
We claim that
\begin{eqnarray}\label{bangbang-1}
    \mathcal A_{T}  \cap  Q \neq \emptyset \;\;\mbox{and}\;\;
    \mathcal A_{T}  \cap  Q \subset \partial Q,
\end{eqnarray}
where $\partial Q$ denotes the boundary of $Q$.
In fact, by Theorem \ref{Lemma-exist-op-NP}, $(NP)^{T,y_0}_{Q}$ has a minimal norm control $v^*_1$ satisfying that
$\hat y(T;y_0,v^*_1)\in Q$ and  $\|v^*_1\|_{L^\infty(0,T;L^2(\Omega))}= N(T,y_0,Q)$. These, along with (\ref{bangbang-0}), imply that $\hat y(T;y_0,v^*_1)\in\mathcal A_{T}  \cap  Q$, which leads to
  the first conclusion in (\ref{bangbang-1}). We now  show the second conclusion in (\ref{bangbang-1}). By contradiction,  suppose that it were not true. Then, there would be $v_2\in L^{\infty}(0,T;L^2(\Omega))$ so that
\begin{eqnarray}\label{bangbang-3}
  \hat y(T;y_0,v_2) \in \mbox{Int}\; Q\;\; \mbox{and}\;\; \|v_2\|_{L^{\infty}(0,T;L^2(\Omega))} \leq N(T,y_0,Q),
\end{eqnarray}
where $\mbox{Int}\; Q$ denotes the interior of $Q$. Since $e^{\Delta T}y_0 \notin Q$, it follows from the first conclusion in (\ref{bangbang-3}) that $v_2$ is non-trivial, i.e., $v_2\neq 0$. By making use of the first conclusion in (\ref{bangbang-3}) again, we can choose  $\lambda\in(0,1)$, with $(1-\lambda)$ small enough, so that
$\hat y(T;y_0,\lambda v_2)\in  Q$.
Thus,  $\lambda v_2$ is an admissible control to $(NP)^{T,y_0}_{Q}$. This, along with the optimality of $N(T,y_0,Q)$, yields that $ N(T,y_0,Q)\leq \lambda\|v_2\|_{L^\infty(0,T;L^2(\Omega))}$. From this, the second conclusion in (\ref{bangbang-3}) and the non-triviality of $v_2$, we are led to  a contradiction. So (\ref{bangbang-1}) is true.

Since both $\mathcal{A}_T$ and $Q$ are convex sets and $\mbox{Int}\; Q\neq\emptyset$, by (\ref{bangbang-1}), we can apply the Hahn-Banach separation theorem to find $\eta^*\in L^2(\Omega)\setminus\{0\}$ so that
\begin{equation}\label{WWanggs2.4}
\sup_{z\in \mathcal A_{T}} \langle z,\eta^* \rangle
= \inf_{w\in Q}  \langle w,\eta^* \rangle.
\end{equation}
Let  $v^*$ be a minimal norm control  to $(NP)^{T,y_0}_{Q}$. Then, we have that
\begin{eqnarray}\label{bangbang-11}
\hat y(T;y_0,v^*)\in Q \;\;\mbox{and}\;\; \|v^*\|_{L^\infty(0,T;L^2(\Omega))}=N(T,y_0,Q),
\end{eqnarray}
from which and (\ref{bangbang-0}), it follows that $\hat y(T;y_0,v^*)\in \mathcal A_{T}  \cap  Q$. This, together with (\ref{WWanggs2.4}), yields  that
$ \max_{z\in \mathcal A_{T}} \langle z,\eta^* \rangle = \langle \hat y(T;y_0,v^*),\eta^* \rangle$.
By this and  (\ref{bangbang-0}), one can easily check that
\begin{eqnarray*}
 \max_{\|v\|_{L^\infty(0,T;L^2(\Omega))}\leq N(T,y_0,Q)} \int_0^T \langle v(t),\chi_\omega e^{\Delta(T-t)}\eta^* \rangle \mathrm dt= \int_0^T \langle v^*(t),\chi_\omega e^{\Delta(T-t)}\eta^* \rangle \mathrm dt.
\end{eqnarray*}
This, along with  the second conclusion in (\ref{bangbang-11}), yields that
\begin{eqnarray}\label{bangbang-12}
 \max_{\|v\|\leq N(T,y_0,Q)}  \langle v,\chi_\omega e^{\Delta(T-t)}\eta^* \rangle
 =  \langle v^*(t),\chi_\omega e^{\Delta(T-t)}\eta^* \rangle \;\;\mbox{ for a.e. }\;\; t\in (0,T).
\end{eqnarray}
Since $\eta^*\neq 0$, it follows from the unique continuation for heat equations (see \cite{Lin}) that
$\chi_\omega e^{\Delta(T-t)}\eta^*\neq 0$ for each $ t\in(0,T)$. This, along with (\ref{bangbang-12}), indicates that
$\|v^*(t)\|=N(T,y_0,Q)$  for a.e. $t\in(0,T)$. So $(NP)^{T,y_0}_{Q}$  holds the bang-bang property in the second case. This ends the proof.
\end{proof}
\begin{Remark}\label{wangremark2.3}
From the bang-bang property, one can easily show the uniqueness of the minimal norm control to $(NP)^{T,y_0}_{Q}$. In our studies, we will not use this uniqueness.
\end{Remark}

\subsection{Properties on the minimal norm and minimal time functions}

Let $y_0 \in L^2(\Omega)$ and $Q \in \mathcal {F}.$ For each $M \geq 0$, we define
\begin{eqnarray}\label{infty-emptyset}
 \mathcal{J}_M\triangleq \big\{t\in \mathbb R^+~:~N(t,y_0,Q)\leq M\big\}.
\end{eqnarray}
We agree that
\begin{eqnarray}\label{infty-emptyset-1}
 \inf \mathcal{J}_M \triangleq \inf \{t\;:\; t\in\mathcal{J}_M\} \triangleq +\infty,
 \;\;\mbox{when}\;\;
 \mathcal{J}_M=\emptyset.
\end{eqnarray}
The following theorem presents a connection between the minimal time and the minimal norm functions. Such connection plays an important role in our studies.
\begin{theorem}\label{Theorem-dis-TP-NP}
Let $y_0\in L^2(\Omega)$ and $Q \in \mathcal{F}$ satisfy (\ref{y0-ball}). Let $\mathcal{J}_M$, with $M\geq 0$,  be defined by (\ref{infty-emptyset}).
Then
\begin{eqnarray}\label{rearrange-TM-NT}
 T(M,y_0,Q)=\inf \mathcal{J}_M \;\;\mbox{for all}\;\; M \geq 0.
\end{eqnarray}
\end{theorem}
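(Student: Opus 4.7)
The plan is to establish the identity $T(M,y_0,Q)=\inf\mathcal{J}_M$ by proving the two inequalities separately, treating the extended real values $+\infty$ as limits. Both directions amount to transferring a control between the two problems: the minimal-norm problem on a fixed interval $(0,t)$ on the one hand, and the minimal-time problem on $\mathbb{R}^+$ with the uniform bound $\|u(s)\|\le M$ on the other.

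For the inequality $T(M,y_0,Q)\le\inf\mathcal{J}_M$, I will first dispose of the trivial case $\mathcal{J}_M=\emptyset$. Otherwise, I fix $t\in\mathcal{J}_M$ and invoke Theorem~\ref{Lemma-exist-op-NP} to obtain a minimal norm control $v^*$ to $(NP)^{t,y_0}_Q$, which satisfies $\|v^*\|_{L^\infty(0,t;L^2(\Omega))}=N(t,y_0,Q)\le M$ and $\hat y(t;y_0,v^*)\in Q$. Extending $v^*$ by zero to all of $\mathbb{R}^+$ yields a function in $\mathcal{U}^M$ that steers $y_0$ into $Q$ at time $t$, so $T(M,y_0,Q)\le t$. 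Taking the infimum over $t\in\mathcal{J}_M$ gives the desired bound.

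For the opposite inequality $T(M,y_0,Q)\ge\inf\mathcal{J}_M$, I may assume $T(M,y_0,Q)<+\infty$; otherwise the previous paragraph forces $\mathcal{J}_M=\emptyset$, so $\inf\mathcal{J}_M=+\infty$ and equality is automatic. Given $\varepsilon>0$, the definition of the infimum furnishes some $\hat t\ge 0$ with $\hat t<T(M,y_0,Q)+\varepsilon$ and some $u\in\mathcal{U}^M$ with $y(\hat t;y_0,u)\in Q$. The assumption $y_0\in L^2(\Omega)\setminus Q$ in (\ref{y0-ball}) forces $\hat t>0$, since $y(0;y_0,u)=y_0\notin Q$. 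Restricting $u$ to $(0,\hat t)$ produces an admissible control for $(NP)^{\hat t,y_0}_Q$ whose $L^\infty(0,\hat t;L^2(\Omega))$-norm is at most $M$, so $N(\hat t,y_0,Q)\le M$. Thus $\hat t\in\mathcal{J}_M$ and $\inf\mathcal{J}_M\le\hat t<T(M,y_0,Q)+\varepsilon$; letting $\varepsilon\to 0^+$ completes the argument.

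There is no serious obstacle here: the statement is essentially an unfolding of the two variational definitions once one knows that admissible controls for one problem restrict or extend to admissible controls for the other. The two things to be careful about are (a) citing Theorem~\ref{Lemma-exist-op-NP} so that the bound $N(t,y_0,Q)\le M$ is attained by a genuine control (rather than only approximated), and (b) using (\ref{y0-ball}) to ensure the approximating time $\hat t$ lies in $\mathbb{R}^+=(0,+\infty)$ so that it actually belongs to $\mathcal{J}_M$.
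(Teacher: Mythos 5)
Your proof is correct and follows essentially the same route as the paper: the inequality $T(M,y_0,Q)\le\inf\mathcal{J}_M$ via zero extension of a minimal norm control (whose existence, as you rightly note, is needed from Theorem~\ref{Lemma-exist-op-NP} to get an exact control with norm $\le M$), and the reverse inequality by restricting near-optimal time controls, using (\ref{y0-ball}) to rule out $\hat t=0$. Your $\varepsilon$-approximation of the infimum is just a cosmetic variant of the paper's device of introducing the set $\mathcal T_{(M,y_0,Q)}$ of admissible times and comparing infima.
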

\begin{proof}
Arbitrarily fix  $y_0\in L^2(\Omega)$ and $Q \in \mathcal{F}$ satisfying (\ref{y0-ball}). Let $M\geq 0$.  Then, by  (\ref{y0-ball}), we see that
\begin{equation}\label{T-9-12-1}
  y(0; y_0, u) \notin Q \;\; \mbox{for all}\;\; u \in \mathcal{U}^M.
\end{equation}

In the case that $ \mathcal{J}_M = \emptyset $,   we first claim that
\begin{eqnarray}\label{dis-TP-NP-1}
 y(t;y_0,u)\notin Q \;\;\mbox{for all}\;\; t >0 \;\mbox{and}\; u\in\mathcal{U}^M.
\end{eqnarray}
By contradiction,  suppose that it were not true. Then there would be  $\hat t > 0$ and  $u_1\in L^\infty(\mathbb R^+;L^2(\Omega))$ so that
\begin{eqnarray}\label{dis-TP-NP-2}
 y(\hat t;y_0,u_1)\in Q  \;\;\mbox{and}\;\; \|u_1\|_{L^\infty(\mathbb R^+;L^2(\Omega))}\leq M.
\end{eqnarray}
The first conclusion in (\ref{dis-TP-NP-2}) implies that $u_1|_{(0,\hat t)}$ is an admissible control to
$(NP)^{\hat t,y_0}_Q$. This, along with the optimality of $N(\hat t,y_0,Q )$ and the second conclusion in (\ref{dis-TP-NP-2}), yields that
$N(\hat t,y_0,Q )\leq \|u_1\|_{L^\infty(0,\hat t;L^2(\Omega))}\leq M$,
which, along with (\ref{infty-emptyset}), shows that
 $\hat t\in \mathcal{J}_M$. This leads to a contradiction, since we are in the case that  $\mathcal{J}_M = \emptyset$. So (\ref{dis-TP-NP-1}) is true. Now, from (\ref{T-9-12-1}) and (\ref{dis-TP-NP-1}), we see that $(TP)_Q^{M,y_0}$ has no admissible control. Thus, it follows by (ii) of Definition~\ref{w-definition1.1} that
$T(M,y_0,Q)= +\infty$.
 This, together with (\ref{infty-emptyset-1}), leads to (\ref{rearrange-TM-NT}) in the case that $\mathcal{J}_M = \emptyset$.

We next consider the  case that $\mathcal{J}_M \neq \emptyset$. Arbitrarily take $\hat t\in\mathcal{J}_M$. Then by (\ref{infty-emptyset}), we see that
$N(\hat t,y_0,Q )\leq M$. Meanwhile, according to  Theorem \ref{Lemma-exist-op-NP},
$(NP)^{\hat t,y_0}_{Q}$ has a minimal norm control $v_{\hat t}$. Write $\hat v_{\hat t}$ for the zero extension of
 $v_{\hat t}$ over $\mathbb{R}^+$. One can easily check that
\begin{eqnarray}\label{dis-TP-NP-3}
  y(\hat t;y_0,\hat v_{\hat t})\in Q \;\;\mbox{and}\;\;
 \|\hat v_{\hat t}\|_{L^\infty(\mathbb{R}^+;L^2(\Omega))}=N(\hat t,y_0,Q)\leq M.
\end{eqnarray}
From (\ref{dis-TP-NP-3}),
we see that $\hat v_{\hat t}$ is an admissible control to $(TP)^{M,y_0}_{Q}$, which drives the solution to $Q$ at time $\hat t$. This,
along with
   the optimality of $T(M,y_0,Q)$, yields  that
 $T(M,y_0,Q)\leq \hat t$.
 Since $\hat t$ was arbitrarily taken from the set $\mathcal{J}_M$, the above implies that
\begin{eqnarray}\label{dis-TP-NP-5}
 T(M,y_0,Q)\leq \inf \mathcal{J}_M.
\end{eqnarray}

We now prove the reverse of (\ref{dis-TP-NP-5}). Define
\begin{eqnarray}\label{dis-TP-NP-6-0}
\mathcal T_{(M ,y_0,Q)}\triangleq \big\{ t \geq 0 ~:~ \exists\,u\in\mathcal U^M  \mbox{ s.t. } y(t;y_0,u)\in Q \big\} .
\end{eqnarray}
From (\ref{dis-TP-NP-3}) and (\ref{T-9-12-1}), it follows that
$
  \mathcal T_{(M ,y_0,Q)} \neq \emptyset
$
and
$ 0 \notin \mathcal T_{(M ,y_0,Q)}.
$
Then, given  $\tilde t\in \mathcal T_{(M ,y_0,Q)}$, with $\tilde t > 0$, there is a control
 $u_{\tilde t}$ in $ L^\infty(\mathbb R^+;L^2(\Omega))$ so that
\begin{eqnarray}\label{dis-TP-NP-6}
 y(\tilde t;y_0,u_{\tilde t})\in Q
  \;\;\mbox{and}\;\;
 \|u_{\tilde t}\|_{L^\infty(\mathbb R^+;L^2(\Omega))}\leq M.
\end{eqnarray}
The first conclusion in (\ref{dis-TP-NP-6}) implies that $u_{\tilde t}|_{(0,\tilde t)}$ is an admissible control to
$(NP)^{\tilde t,y_0}_{Q}$. This, along with the optimality of $N(\tilde t,y_0,Q )$ and the second conclusion in (\ref{dis-TP-NP-6}), yields that
$N(\tilde t,y_0,Q )\leq \|u_{\tilde t}|_{(0,\tilde t)}\|_{L^\infty(0, \tilde t; L^2(\Omega))}\leq M$, which, along with (\ref{infty-emptyset}), yields  that $\tilde t \in \mathcal{J}_M$. Hence, $ \inf \mathcal{J}_M \leq \tilde t$.
Since $0 \notin \mathcal T_{(M ,y_0,Q)}$, and because
$\tilde{t}>0$ was arbitrarily taken from  $\mathcal T_{(M ,y_0,Q)}$, we have that $ \inf \mathcal{J}_M \leq \inf \mathcal T_{(M ,y_0,Q)}$. This, along with (\ref{dis-TP-NP-6-0}) and  (\ref{TP}), shows  that
$\inf \mathcal{J}_M \leq T(M ,y_0,Q)$, which, together with (\ref{dis-TP-NP-5}), leads to (\ref{rearrange-TM-NT}) in the case that
$\mathcal{J}_M \neq \emptyset$.

In summary, we end the proof of this theorem.
\end{proof}

\begin{Remark}
For better understanding of Theorem~\ref{Theorem-dis-TP-NP}, we explain it  with the aid of Figure 2.1, where the curve denotes the graph of the minimal norm function.
Suppose that the minimal norm function is continuous over $\mathbb R^+$ (which will be proved in Theorem \ref{Theorem-Lip-NP}).
A beam (which is parallel to the $t$-axis and has the distance $M$ with the $t$-axis) moves from the left to the right. The first time point at which this beam reaches the curve is the minimal time to $(TP)^{M,y_0}_{Q}$. Thus, we  can  treat Theorem~\ref{Theorem-dis-TP-NP} as a ``falling sun theorem" (see, for instance, rasing sun lemma---Lemma 3.5 and Figure 5 on Pages 121-122 in \cite{EMStein}): If one thinks of the sun falling down the west (at the left) with the rays of light parallel to the $t$-axis, then the points $\big(T(M,y_0,Q),M\big)$, with $M\geq 0$,  are precisely the points which are in the bright part on the curve. (These points constitutes the part outside bold on the curve in  Figure 2.1.)
\begin{figure}[htp]
\begin{center}
  \includegraphics[width=4in]{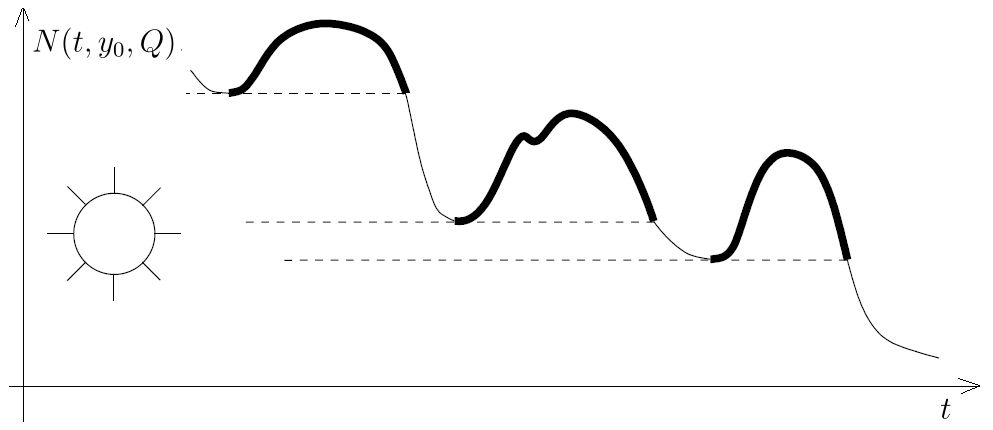}\\
  \caption{Falling sum theorem}
  \end{center}
\end{figure}
\end{Remark}

The following result mainly concerns with the continuity of the minimal norm function.

\begin{theorem}\label{Theorem-Lip-NP}
  Given $y_0\in L^2(\Omega)$ and $Q \in \mathcal{F}$, the minimal norm function
   $t \rightarrow N(t,y_0,Q )$
   is locally Lipschitz continuous over $\mathbb{R}^+$. If further assume that $y_0$ and $Q$  satisfy (\ref{y0-ball}), then $\lim_{t\rightarrow 0^+} N(t,y_0,Q ) = +\infty$.
\end{theorem}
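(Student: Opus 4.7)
The plan is to establish the two assertions separately. The blow-up $\lim_{t \to 0^+} N(t,y_0,Q) = +\infty$ is a short compactness/closedness argument, while the local Lipschitz estimate requires a convex-combination construction that exploits the nonempty interior of $Q$.

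For the blow-up, I would argue by contradiction. If $\liminf_{t\to 0^+} N(t,y_0,Q) < +\infty$, then there exist $t_n \downarrow 0$ and $M < \infty$ with $N(t_n,y_0,Q) \leq M$; by Theorem~\ref{Lemma-exist-op-NP}, pick minimal norm controls $v_n^*$ with $\|v_n^*\|_{L^\infty(0,t_n;L^2(\Omega))} \leq M$ and $\hat y(t_n;y_0,v_n^*) \in Q$. Writing the mild solution $\hat y(t_n;y_0,v_n^*) = e^{\Delta t_n}y_0 + \int_0^{t_n} e^{\Delta(t_n-s)} \chi_\omega v_n^*(s)\,ds$, the first term tends to $y_0$ in $L^2(\Omega)$ by strong continuity of the heat semigroup, and the second has norm at most $M t_n \to 0$. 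Hence $\hat y(t_n;y_0,v_n^*) \to y_0$ in $L^2(\Omega)$, and closedness of $Q$ forces $y_0 \in Q$, contradicting (\ref{y0-ball}).

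For local Lipschitz continuity on a compact interval $[a,b] \subset \mathbb{R}^+$, I would first fix $z_0 \in \mathrm{Int}(Q)$ and $\rho > 0$ with $B(z_0,2\rho) \subset Q$, then produce a competitor family $\{v_0^t\}_{t\in[a,b]}$ with $\sup_t \|v_0^t\|_{L^\infty(0,t;L^2(\Omega))} \leq V_0 < \infty$ and $\hat y(t;y_0,v_0^t) \in B(z_0,\rho) \subset Q$. This gives the preliminary bound $N(t,y_0,Q) \leq V_0$ on $[a,b]$. For $t_1 < t_2$ in $[a,b]$ with $\delta := t_2 - t_1$ small, let $v_1^*$ be a minimal norm control at $t_1$ and put $z_1 := \hat y(t_1;y_0,v_1^*) \in Q$. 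Define $\tilde v$ on $(0,t_2)$ by zero-padding on $(0,\delta)$ followed by the $\delta$-translate of $v_1^*$; a direct computation yields $\hat y(t_2;y_0,\tilde v) = z_1 + (e^{\Delta\delta} - I)e^{\Delta t_1}y_0$, and the smoothing estimate $\|\Delta e^{\Delta\tau}\|_{\mathcal L(L^2)} \leq 1/(e\tau)$ bounds the error by $C_1\delta$ for $t_1 \geq a$. Taking the convex combination $\bar v := (1-\lambda)\tilde v + \lambda v_0^{t_2}$ with $\lambda = 2C_1\delta/\rho$, affineness of the input-to-state map gives $\hat y(t_2;y_0,\bar v) = \big[(1-\lambda)z_1 + \lambda \hat y(t_2;y_0,v_0^{t_2})\big] + (1-\lambda)(e^{\Delta\delta}-I)e^{\Delta t_1}y_0$; convexity of $Q$ and $B(z_0,2\rho) \subset Q$ absorb the perturbation into $Q$, while $\|\bar v\|_\infty \leq (1-\lambda)N(t_1,y_0,Q) + \lambda V_0$ yields $N(t_2,y_0,Q) \leq N(t_1,y_0,Q) + L\delta$. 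A symmetric argument, starting from the minimal norm control $v_2^*$ at $t_2$ and using $\hat v(s) := v_2^*(s+\delta)$ on $(0,t_1)$, gives the reverse inequality $N(t_1,y_0,Q) \leq N(t_2,y_0,Q) + L'\delta$.

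The main obstacle is the construction of the uniformly bounded family $\{v_0^t\}_{t\in[a,b]}$. I would do this in two stages: first, use approximate controllability of the heat equation to fix a single control $\bar v \in L^\infty(0,a/2;L^2(\Omega))$ steering $y_0$ to within $\rho/2$ of $z_0$ at time $a/2$; second, for each $t \in [a,b]$, concatenate $\bar v$ with a control on $(a/2,t)$ that steers the state to within $\rho/2$ of $z_0$ at time $t$. Since the residual time $t-a/2 \in [a/2, b-a/2]$ stays in a compact subset of $\mathbb{R}^+$, the associated approximate-controllability cost is uniformly bounded in $t$, yielding the required $V_0$. Once this ingredient is in place, the convex-combination construction completes the proof of local Lipschitz continuity.
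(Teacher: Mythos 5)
Your blow-up argument at $t\to 0^+$ is essentially the paper's own Step 3 (strong convergence of $\chi_{(0,t_n)}v_n^*$ to zero, strong continuity of the semigroup, closedness of $Q$), so nothing to add there. For the local Lipschitz continuity you take a genuinely different route. The paper shifts the optimal control in time and then cancels the two resulting error terms \emph{exactly}, via two $L^\infty$-null controls $f_1,f_2$ supported in a short subinterval (\cite[Proposition 3.2]{FZ}); this produces the explicit estimate $|N(T_1,y_0,Q)-N(T_2,y_0,Q)|\leq C(\Omega,\omega,Q,\delta)(\|y_0\|+1)|T_1-T_2|$, uniform over all $T_1,T_2$ with $T_2/2>\delta$. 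You instead absorb the error \emph{softly}: you mix the shifted control convexly with a uniformly bounded reference family $\{v_0^t\}$ reaching a ball $B(z_0,\rho)$ with $B(z_0,2\rho)\subset Q$, and let convexity of $Q$ plus the interior slack swallow the $O(\delta)$ perturbation (the state is a convex combination of $z_1\in Q$ and $w+(1-\lambda)e/\lambda\in B(z_0,2\rho)$ once $\lambda=2C_1\delta/\rho$). Your computations check out: $\hat y(t_2;y_0,\tilde v)=z_1+(e^{\Delta\delta}-I)e^{\Delta t_1}y_0$ is correct, the smoothing bound $\|\Delta e^{\Delta\tau}\|_{\mathcal L(L^2)}\leq 1/(e\tau)$ gives the $C_1\delta$ error for $t_1\geq a$, and in the reverse direction the dropped head of $v_2^*$ contributes an extra term of size at most $\delta N(t_2,y_0,Q)\leq\delta V_0$, which your preliminary bound covers. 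The trade-off: your compensation step avoids null controllability entirely and would work verbatim for any affine-in-control system with a target of nonempty interior, at the price of non-explicit local constants; the paper's null-control corrections buy constants of the form $C(\Omega,\omega,Q,\delta)(\|y_0\|+1)$ valid globally on $(\delta,+\infty)$.

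The one step you assert rather than prove is the uniform bound $V_0$ for $\{v_0^t\}_{t\in[a,b]}$. Compactness of the residual-time interval does not by itself bound the approximate-controllability cost uniformly in $t$: the free evolution of the intermediate state $y_1=\hat y(a/2;y_0,\bar v)$ varies with the horizon, and since $y_1\neq 0$ in general you cannot zero-pad a single control to cover all residual times. Two repairs are available. One is exactly the paper's Step 1 construction: null-control $y_0$ to $0$ on a window of length $a/2$ with cost $C(\Omega,\omega,a/2)\|y_0\|$ (\cite[Proposition 3.2]{FZ}), coast with the zero control (the zero state is invariant), and append one \emph{fixed} control of fixed duration steering $0$ into $B(z_0,\rho)$ at the terminal time; this gives $V_0\leq C(\Omega,\omega,a)\|y_0\|+\|v_{a/2}\|_{L^\infty}$ uniformly on $[a,b]$. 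The other is a finite-net argument: choose controls $v_i$ hitting $B(z_0,\rho/2)$ at finitely many times $s_i$ covering $[a,b]$, extend each by zero, and use continuity of each fixed trajectory $t\mapsto\hat y(t;y_0,v_i)$ together with the slack $\rho/2$ to handle all $t$ near $s_i$. With either repair your proof is complete; note also that your choice $\lambda=2C_1\delta/\rho$ requires $\delta\leq\rho/(2C_1)$, so for larger increments one should chain small steps or invoke the bound $N\leq V_0$ directly.
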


\begin{proof}
We divide the proof into following three steps:

\textit{Step 1. To show that for each $y_0\in L^2(\Omega)$, $Q \in \mathcal{F}$ and each $\delta>0$, there is $C_1(\Omega,\omega,Q,\delta)>0$ so that
\begin{eqnarray}\label{NT-bdd}
 N(T,y_0,Q)\leq C_1(\Omega,\omega,Q,\delta)(\|y_0\|+1)
 \;\;\mbox{for all}\;\;
  T>\delta
\end{eqnarray}}
  $\quad~\,$Arbitrarily fix $y_0\in L^2(\Omega)$, $Q \in \mathcal{F}$ and $\delta>0$. Since the heat equation holds the approximate controllability (see \cite[Theorem 1.4]{FZ}) and the $L^\infty$-null controllability  (see, for instance, \cite[Proposition 3.2]{FZ}), there is $v_\delta$ (only depending on $\delta$, $Q$, $\Omega $ and $\omega$)
 and $v_\delta^\prime$ so that
  \begin{eqnarray}\label{Property-NT-2}
 \hat y(\delta;0,v_\delta)\in Q,\;\; \hat y(\delta;y_0,v_\delta^\prime)=0
 \;\;\mbox{and}\;\;
     \|v_\delta^\prime\|_{L^\infty(0,\delta;L^2(\Omega))} \leq C(\Omega,\omega,\delta)\|y_0\|.
 \end{eqnarray}
 Still write   $v_\delta^\prime$ for its zero extension over $\mathbb{R}^+$. Arbitrarily fix $T>\delta$. Define a control $\widehat v_T$ over $(0,T)$ as follow:
 \begin{eqnarray}\label{NP}
\widehat v_T(t)\triangleq
\left\{\begin{array}{ll}
         v_\delta^\prime(t),~t\in(0,T-\delta], \\
          v_\delta^\prime(t)+v_\delta(t-T+\delta),  ~t\in (T-\delta,T).
       \end{array}
\right.
\end{eqnarray}
 From (\ref{NP}) and  the last conclusion in  (\ref{Property-NT-2}), we find that
 \begin{eqnarray}\label{Property-NT-4}
  \|\widehat v_T\|_{L^\infty(0,T;L^2(\Omega))} \leq C(\Omega,\omega,\delta)\|y_0\|+\|v_\delta\|_{L^\infty(0,\delta;L^2(\Omega))}.
 \end{eqnarray}
Meanwhile,  from (\ref{NP}) and the first two conclusions in (\ref{Property-NT-2}),
we see that
 \begin{eqnarray*}
  \hat y(T;y_0,\widehat v_T) &=&   e^{\Delta T}y_0
                                +  \int_0^\delta e^{\Delta(T-t)}\chi_\omega v_\delta^\prime(t)\mathrm dt
                                +     \int_{T-\delta}^T e^{\Delta(T-t)}\chi_\omega v_\delta(t-T+\delta)\mathrm dt \nonumber\\
  &=&  e^{\Delta(T-\delta)}\hat y(\delta;y_0,v_\delta^\prime)
      +  \int_{0}^{\delta} e^{\Delta(\delta-t)}\chi_\omega v_\delta(t)\mathrm dt   = \hat y(\delta;0,v_\delta)\in Q,
 \end{eqnarray*}
 which shows  that $\widehat v_T$ is an admissible control to $(NP)^{T,y_0}_{Q}$. This, along with  the optimality of $N(T,y_0,Q)$ and (\ref{Property-NT-4}), indicates that
 \begin{eqnarray*}
  N(T,y_0,Q)\leq \|\widehat v_T\|_{L^\infty(0,T;L^2(\Omega))}
  \leq C(\Omega,\omega,\delta)\|y_0\|+\|v_\delta\|_{L^\infty(0,\delta;L^2(\Omega))},
 \end{eqnarray*}
 which leads to (\ref{NT-bdd}). Here we used the fact that $v_\delta$ only depends on $\delta$, $Q$, $\Omega $ and $\omega$.

\textit{Step 2. To show that for each $y_0\in L^2(\Omega)$, $Q\in \mathcal{F}$ and each  triplet $(\delta,T_1,T_2)$, with $0<\delta<T_2/2<T_1<T_2$, there exists a constant $C_2(\Omega,\omega,Q,\delta)>0$ so that
\begin{eqnarray}\label{Step2-conclu}
 |N(T_1,y_0,Q)-N(T_2,y_0,Q)| \leq  C_2(\Omega,\omega,Q,\delta)(\|y_0\|+1) |T_1-T_2|
\end{eqnarray}}
$\quad~\,$Arbitrarily fix $y_0\in L^2(\Omega)$, $Q\in \mathcal{F}$ and $(\delta,T_1,T_2)$ as required.
To prove (\ref{Step2-conclu}), it suffices to show that for some $C_2(\Omega,\omega,Q,\delta)>0$,
\begin{eqnarray}\label{Property-NT-20}
   N(T_1,y_0,Q) - N(T_2,y_0,Q)  \leq  C_2(\Omega,\omega,Q,\delta)(\|y_0\|+1)(T_2-T_1);
  \end{eqnarray}
\begin{eqnarray}\label{Property-NT-21}
   N(T_2,y_0,Q) - N(T_1,y_0,Q)  \leq  C_2(\Omega,\omega,Q,\delta)(\|y_0\|+1)(T_2-T_1).
\end{eqnarray}
To show (\ref{Property-NT-20}), we first note that $(NP)^{T_2,y_0}_{Q}$ has a minimal norm control  $v_{T_2}^*$
  (see  Theorem \ref{Lemma-exist-op-NP}). Thus,
\begin{eqnarray}\label{Property-NT-2-1}
 \hat y(T_2;y_0,v_{T_2}^*)\in Q
 \;\;\mbox{and}\;\;
  \| v_{T_2}^*\|_{L^\infty(0,T_2;L^2(\Omega))}=N(T_2,y_0,Q).
\end{eqnarray}
We set
\begin{eqnarray}\label{Property-NT-2-2}
  z_{T_1}\triangleq \hat y(3T_2/4;0,\chi_{(0,T_2-T_1)}v_{T_2}^*).
\end{eqnarray}
Since $T_2/4>\delta/4$, it follows from the $L^{\infty}$-null controllability for the heat equation (see \cite[Proposition 3.2]{FZ}) that there is  $f_1\in L^\infty(0,T_2/4;L^2(\Omega))$, with $f_1=0$ over $(\delta/4, T_2/4)$,  so that
\begin{eqnarray}\label{Property-NT-2-3}
0=\hat y(\delta/4;z_{T_1},f_1)=\hat y(T_2/4;z_{T_1},f_1);
\end{eqnarray}
\begin{eqnarray}\label{wanGG2.26}
 \|f_1\|_{L^\infty(0,T_2/4;L^2(\Omega))}\leq C(\Omega,\omega,\delta/4)\|z_{T_1}\|.
\end{eqnarray}
Since $T_2-T_1< 3T_2/4$,  from (\ref{Property-NT-2-2}) and  (\ref{Property-NT-2-3}),
we can easily check   that
\begin{eqnarray}\label{Property-NT-2-3-1}
 \hat y(T_2;0,\chi_{(0,T_2-T_1)}v_{T_2}^*) &=& e^{\Delta T_2/4} z_{T_1}=-\int_0^{T_2/4} e^{\Delta(T_2/4-t)} \chi_\omega f_1(t) \mathrm dt.
\end{eqnarray}

We next set
\begin{eqnarray}\label{Property-NT-2-2-1}
 w_{T_1}\triangleq e^{\Delta (T_1-T_2/4)}(e^{\Delta (T_2-T_1)} y_0-y_0).
\end{eqnarray}
Since $T_2/4>\delta/4$, by \cite[Proposition 3.2]{FZ}, we find that there is  $f_2\in L^\infty(0,T_2/4;L^2(\Omega))$, with $f_2=0$ over $(\delta/4,T_2/4)$,  so that
\begin{eqnarray}\label{Property-NT-2-4}
0=\hat y(\delta/4;w_{T_1},f_2)=\hat y(T_2/4;w_{T_1},f_2);
\end{eqnarray}
\begin{eqnarray}\label{GSwang2.29}
\|f_2\|_{L^\infty(0,T_2/4;L^2(\Omega))}\leq C(\Omega,\omega,\delta/4)\|w_{T_1}\|.
\end{eqnarray}
 From (\ref{Property-NT-2-2-1}) and  (\ref{Property-NT-2-4}), we see  that
\begin{eqnarray}\label{Property-NT-2-3-2}
 e^{\Delta T_2}y_0-e^{\Delta T_1}y_0 &=& e^{\Delta T_2/4} w_{T_1}=-\int_0^{T_2/4} e^{\Delta(T_2/4-t)} \chi_\omega f_2(t) \mathrm dt.
\end{eqnarray}

Now we define a control $f_3$ over $(0,T_1)$ by
\begin{eqnarray}\label{Property-NT-2-5}
  f_3(t)\triangleq  \left\{\begin{array}{ll}
                     v_{T_2}^*(t-T_1+T_2),&t\in (0,T_1-T_2/4),\\
                     v_{T_2}^*(t-T_1+T_2)-f_1(t-T_1+T_2/4)  & \\
                     -f_2(t-T_1+T_2/4),&t\in (T_1-T_2/4,T_1).
                    \end{array}
                    \right.
 \end{eqnarray}
Two observations are given in order: First, by (\ref{Property-NT-2-5}),  (\ref{wanGG2.26}) and  (\ref{GSwang2.29}), we see that
\begin{eqnarray*}
 \|f_3\|_{L^\infty(0,T_1;L^2(\Omega))}  \leq \|v_{T_2}^*\|_{L^\infty(T_2-T_1,T_2;L^2(\Omega))} + C(\Omega,\omega,\delta/4)\big(\|z_{T_1}\|+\|w_{T_1}\| \big).
\end{eqnarray*}
Second,  by (\ref{Property-NT-2-5}), (\ref{Property-NT-2-3-1}), (\ref{Property-NT-2-3-2}) and (\ref{Property-NT-2-2}),
 after some computations, one can easily check that $\hat y(T_1;y_0,f_3)=\hat y(T_2;y_0,v^*_{T_2})$, which, along with the first conclusion in (\ref{Property-NT-2-1}), yields that
 $f_3$ is an admissible control to $(NP)^{T_1,y_0}_{Q}$.
 These two observations,  together with the optimality of
  $N(T_1,y_0,Q)$, indicate that
  \begin{eqnarray*}
   N(T_1,y_0,Q)\leq \|v_{T_2}^*\|_{L^\infty(0,T_2;L^2(\Omega))} + C(\Omega,\omega,\delta/4)\big(\|z_{T_1}\|+\|w_{T_1}\| \big).
  \end{eqnarray*}
  This, along with the second conclusion in (\ref{Property-NT-2-1}), (\ref{Property-NT-2-2}) and (\ref{Property-NT-2-2-1}), implies that
  \begin{eqnarray}\label{Property-NT-20-0}
    N(T_1,y_0,Q)   &\leq& N(T_2,y_0,Q)   + C(\Omega,\omega,\delta/4)  \,\big[ (T_2-T_1)N(T_2,y_0,Q)
     \nonumber\\
    & & +\|e^{\Delta (T_1-T_2/4)}(e^{\Delta (T_2-T_1)} y_0-y_0)\| \big] .
  \end{eqnarray}
  Meanwhile, since $T_1-T_2/4>T_2/4> \delta/2$, it follows that
  \begin{eqnarray}\label{Property-NT-20-1}
   & & \|e^{\Delta (T_1-T_2/4)}(e^{\Delta (T_2-T_1)} y_0-y_0)\| \nonumber\\
   &\leq& \|e^{\Delta T_2/4}(e^{\Delta (T_2-T_1)} y_0-y_0)\|
   = \big\|\int_0^{T_2-T_1}\Delta e^{\Delta s} (e^{\Delta T_2/4} y_0)\mathrm ds\big\|
   \nonumber\\
   &\leq& (T_2-T_1)\|\Delta e^{\Delta T_2/4} y_0\|\leq 4(T_2-T_1)\|y_0\|/T_2\leq 2(T_2-T_1)\|y_0\|/\delta.~~~~~
  \end{eqnarray}
  From (\ref{Property-NT-20-0}), (\ref{Property-NT-20-1}) and (\ref{NT-bdd}),
  we obtain (\ref{Property-NT-20}).

To show (\ref{Property-NT-21}), we note that
    $(NP)^{T_1,y_0}_{Q}$ has a minimal norm control  $v_{T_1}^*$ (see
     Theorem \ref{Lemma-exist-op-NP}). Thus,
\begin{eqnarray}\label{Property-NT-2-2-10}
 \hat y(T_1;y_0,v_{T_1}^*)\in Q
 \;\;\mbox{and}\;\;
  \| v_{T_1}^*\|_{L^\infty(0,T_1;L^2(\Omega))}=N(T_1,y_0,Q).
\end{eqnarray}
We define a control $f_4$ by
\begin{eqnarray}\label{Property-NT-2-2-5}
  f_4(t)\triangleq  \left\{\begin{array}{ll}
                      0,& t\in (0,T_2-T_1),\\
                     v_{T_1}^*(t-T_2+T_1),&t\in (T_2-T_1,3T_2/4),\\
                     v_{T_1}^*(t-T_2+T_1)+f_2(t-3T_2/4),&t\in (3T_2/4,T_2),
                    \end{array}
                    \right.
 \end{eqnarray}
  where $f_2$ is given by (\ref{Property-NT-2-4}). By (\ref{Property-NT-2-2-5}) and (\ref{Property-NT-2-3-2}),
   after some direct computations, we find that $\hat y(T_2; y_0,f_4)=\hat y(T_1;y_0,v^*_{T_1})$, which, along with the first conclusion in (\ref{Property-NT-2-2-10}), shows that  $f_4$ is an admissible control to $(NP)^{T_2,y_0}_{Q}$. This, together with the optimality of
  $N(T_2, y_0, Q )$ and (\ref{Property-NT-2-2-5}), yields that
  \begin{eqnarray*}
   N(T_2,y_0,Q)\leq \|f_4\|_{L^\infty(0,T_2;L^2(\Omega))} \leq \|v^*_{T_1}\|_{L^\infty(0,T_1;L^2(\Omega))} +\|f_2\|_{L^\infty(0,T_2/4;L^2(\Omega))}.
  \end{eqnarray*}
  Then we see from the second conclusion in   (\ref{Property-NT-2-2-10}) and (\ref{GSwang2.29}) that
  \begin{eqnarray*}
   N(T_2,y_0,Q)\leq N(T_1,y_0,Q) +C(\Omega,\omega,\delta/4)\|w_{T_1}\|.
  \end{eqnarray*}
  This, along with (\ref{Property-NT-2-2-1}) and (\ref{Property-NT-20-1}), leads to (\ref{Property-NT-21}).

\textit{Step 3. To show that $\lim_{t \rightarrow 0^+} N(t,y_0,Q)= +\infty$, when $(y_0,Q)\in L^2(\Omega)\times\mathcal{F}$  verifies (\ref{y0-ball}) }

By contradiction,
we suppose that it were not true. Then  there would be   $(y_0, Q)\in L^2(\Omega)\times \mathcal{F}$, with  (\ref{y0-ball}); and
 $\{t_n\}\subset\mathbb R^+$, with $\lim_{n\rightarrow +\infty} t_n=0$, so that
\begin{equation}\label{step3-1}
 \sup_{n\in \mathbb N^+} N(t_n,y_0,Q ) \leq C  \;\;\mbox{for some}\;\; C>0.
\end{equation}
By  Theorem \ref{Lemma-exist-op-NP}, we find that for each $n\in\mathbb N^+$, $(NP)^{t_n,y_0}_{Q}$ has a minimal norm control $v_n$ satisfying that
 \begin{eqnarray}\label{Property-NT-36}
  \hat y(t_n;y_0,v_n)\in Q \;\;\mbox{ and }\;\; \|v_n\|_{L^\infty(0,T_n;L^2(\Omega))} =N(t_n,y_0,Q ).
 \end{eqnarray}
 Extend $v_n$ over $\mathbb R^+$ by setting it to be zero over $[t_n, +\infty)$ and still denote the extension  in the same manner.
 Since $\lim_{n\rightarrow +\infty} t_n=0$, it follows from the second conclusion in (\ref{Property-NT-36}) and  (\ref{step3-1})  that
 $  \chi_{(0,t_n)} v_n\rightarrow 0$  strongly in $L^2(\mathbb R^+;L^2(\Omega))$,
  as  $n\rightarrow +\infty$.
 This, together with the first conclusion in (\ref{Property-NT-36})
 and the fact that $\lim_{n\rightarrow +\infty} t_n=0$,
  yields that
 $ y_0=\lim_{n\rightarrow +\infty} \hat y(t_n;y_0,v_n)\in Q$,
  which contradicts (\ref{y0-ball}). Hence, the conclusion in Step 3 is true.

 In summary,   we end the proof of this theorem.
\end{proof}

 By Theorem \ref{Theorem-Lip-NP} and Theorem \ref{Theorem-dis-TP-NP}, we can prove the following  Proposition \ref{Proposition-N-T}, which will be used in the proof of Theorem~\ref{theorem1.5-new}.
\begin{proposition}\label{Proposition-N-T}
Let $y_0 \in L^2(\Omega)$ and $Q \in \mathcal{F}$  satisfy (\ref{y0-ball}). Then the following two conclusions are valid:\\
(i) For each $M \in [0, +\infty)$, it holds that  $T(M,y_0,Q)\in (0, +\infty]$. \\
(ii) For each $M \in [0, +\infty)$, with $T(M,y_0,Q)<+\infty$, it stands that
\begin{eqnarray}\label{rearrange-TM-NT-1}
 N(T(M,y_0,Q),y_0,Q)=M .
\end{eqnarray}

\end{proposition}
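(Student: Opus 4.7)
The plan is to deduce both parts directly from Theorem~\ref{Theorem-dis-TP-NP} (the identity $T(M,y_0,Q)=\inf \mathcal{J}_M$) together with the two ingredients supplied by Theorem~\ref{Theorem-Lip-NP}: the local Lipschitz continuity of $t\mapsto N(t,y_0,Q)$ on $\mathbb{R}^+$ and the blow-up $\lim_{t\to 0^+}N(t,y_0,Q)=+\infty$ under \eqref{y0-ball}. No separate construction of controls is needed at this level; everything is squeezed out of these two theorems.

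For (i), I would fix $M\geq 0$ and argue by contradiction: suppose $T(M,y_0,Q)=0$. By Theorem~\ref{Theorem-dis-TP-NP} this gives $\inf \mathcal{J}_M=0$, so there is a sequence $\{t_n\}\subset\mathcal{J}_M$ with $t_n\to 0^+$. The definition \eqref{infty-emptyset} yields $N(t_n,y_0,Q)\leq M$ for every $n$, contradicting $\lim_{t\to 0^+}N(t,y_0,Q)=+\infty$ from Theorem~\ref{Theorem-Lip-NP}. Since $T(M,y_0,Q)$ takes values in $[0,+\infty]$ by Definition~\ref{w-definition1.1}, this forces $T(M,y_0,Q)\in(0,+\infty]$.

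For (ii), set $T^{*}:=T(M,y_0,Q)$ and assume $T^{*}<+\infty$. Part~(i) gives $T^{*}>0$. By Theorem~\ref{Theorem-dis-TP-NP}, $T^{*}=\inf\mathcal{J}_M$, so $\mathcal{J}_M\neq\emptyset$ and we may choose $\{t_n\}\subset\mathcal{J}_M$ with $t_n\to T^{*}$. Since $t\mapsto N(t,y_0,Q)$ is continuous at $T^{*}\in\mathbb{R}^+$ by Theorem~\ref{Theorem-Lip-NP}, passing to the limit in $N(t_n,y_0,Q)\leq M$ yields $N(T^{*},y_0,Q)\leq M$. For the reverse inequality, suppose $N(T^{*},y_0,Q)<M$. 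Continuity then produces $\delta\in(0,T^{*})$ such that $N(t,y_0,Q)<M$ for all $t\in(T^{*}-\delta,T^{*}+\delta)$; in particular $T^{*}-\delta/2$ lies in $\mathcal{J}_M$ and is strictly less than $T^{*}$, contradicting $T^{*}=\inf\mathcal{J}_M$. Hence $N(T^{*},y_0,Q)=M$, which is \eqref{rearrange-TM-NT-1}.

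There is no genuinely hard step here; the only point requiring care is ensuring that the continuity of $N(\cdot,y_0,Q)$ is applied at a strictly positive time, which is exactly what part~(i) secures, and observing that $\mathcal{J}_M$ is closed in $(0,+\infty)$ so that the infimum is actually attained once positivity is known.
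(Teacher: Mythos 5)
Your proposal is correct, and part (ii) is essentially identical to the paper's own argument: the paper likewise takes $t_n\in\mathcal{J}_M$ with $t_n\to T(M,y_0,Q)$, uses the continuity of the minimal norm function at the (positive, finite) minimal time to get $N(T(M,y_0,Q),y_0,Q)\leq M$, and rules out strict inequality by producing $T(M,y_0,Q)-\delta_0\in\mathcal{J}_M$, contradicting $T(M,y_0,Q)=\inf\mathcal{J}_M$ from Theorem~\ref{Theorem-dis-TP-NP}. The only real difference is in part (i): the paper proves it directly from the definition (\ref{TP}), taking $\hat t_n\to 0$ and admissible controls $u_n\in\mathcal{U}^M$, observing that $\chi_{(0,\hat t_n)}u_n\to 0$ strongly in $L^2(\mathbb{R}^+;L^2(\Omega))$, so $y_0=\lim_{n\to+\infty}y(\hat t_n;y_0,u_n)\in Q$, contradicting (\ref{y0-ball}); you instead deduce (i) from Theorem~\ref{Theorem-dis-TP-NP} combined with the blow-up $\lim_{t\to 0^+}N(t,y_0,Q)=+\infty$ of Theorem~\ref{Theorem-Lip-NP}. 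This is legitimate and involves no circularity, since Theorem~\ref{Theorem-Lip-NP} is proved independently of the proposition; indeed, Step~3 of its proof runs exactly the same limiting argument (with minimal norm controls in place of time-optimal admissible ones), so your route reuses the packaged result rather than reproving the mechanism --- marginally more economical, at the mild cost of making (i) depend on Theorem~\ref{Theorem-Lip-NP}, which the paper's version of (i) does not. One small point: your closing remark that $\mathcal{J}_M$ is closed in $(0,+\infty)$ is true (by continuity of the minimal norm function) but superfluous, since the first half of your part (ii) already shows $N(T^{*},y_0,Q)\leq M$, i.e., that the infimum is attained, without any appeal to closedness.
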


\begin{proof}
Arbitrarily fix $y_0$ and $Q$  satisfying (\ref{y0-ball}). We will prove (i)-(ii) one by one.

(i)
 By contradiction, we suppose that the conclusion (i) were not true. Then
 there would be $M \in [0, +\infty)$ so that
   $T(M,y_0,Q)=0$. This, along with  (\ref{TP}), yields that there  exists $\{\hat t_n\}\subset [0, +\infty)$ and $\{u_n\}\subset\mathcal U^M$ so that
\begin{equation}\label{NTM}
  \lim_{n\rightarrow +\infty} \hat t_n=0,\;\;
 y(\hat t_n;y_0,u_n)\in Q
 \;\;\mbox{and}\;\;
  \| u_n\|_{L^\infty(\mathbb R^+;L^2(\Omega))} \leq M.
\end{equation}
By the last inequality in (\ref{NTM}),
$\chi_{(0,\hat t_n)}u_n \rightarrow 0$ strongly in $L^2(\mathbb R^+;L^2(\Omega))$.
This, along with the first two conclusions in (\ref{NTM}), yields that
$y_0=\lim_{n\rightarrow +\infty}y(\hat t_n;y_0,u_n)\in Q$,
which contradicts (\ref{y0-ball}). So the conclusion (i) is true.

(ii) Arbitrarily fix $M \in [0, +\infty)$ so that
$T(M,y_0,Q)<+\infty$.
By the conclusion (i) in this proposition, we find that
$0<T(M,y_0,Q)<+\infty$.
This, along with (\ref{rearrange-TM-NT}) (see Theorem \ref{Theorem-dis-TP-NP}) and (\ref{infty-emptyset-1}),
indicates that $\mathcal{J}_M\neq\emptyset$. Thus, by (\ref{rearrange-TM-NT}), (\ref{infty-emptyset-1}) and (\ref{infty-emptyset}), there is a sequence $\{t_n\}\subset\mathbb R^+$ so that
\begin{eqnarray*}
 \lim_{n \rightarrow +\infty}t_n = T(M,y_0,Q)
 \;\;\mbox{and}\;\;
   N(t_n,y_0,Q )\leq M
   \;\;\mbox{for all}\;\;   n\in\mathbb N^+.
\end{eqnarray*}
Since $T(M,y_0,Q) \in  (0, +\infty)$, the above conclusions, together with the continuity of the minimal norm function at  $T(M,y_0,Q)\in (0, +\infty)$ (see Theorem \ref{Theorem-Lip-NP}), yield that
\begin{eqnarray}\label{TP-NP-0}
 N(T(M,y_0,Q),y_0,Q)\leq M .
\end{eqnarray}
We next prove (\ref{rearrange-TM-NT-1}). By contradiction, we suppose that it were not true. Then by (\ref{TP-NP-0}), we would have that $ N(T(M,y_0,Q),y_0,Q)< M$. This, along with  the continuity of the minimal norm function at $T(M,y_0,Q)$, yields  that there is $\delta_0\in \big(0,T(M,y_0,Q)\big)$ so that $N(T(M,y_0,Q)-\delta_0,y_0,Q)<M$. Then it follows from (\ref{infty-emptyset}) that $T(M,y_0,Q)-\delta_0 \in \mathcal{J}_M$, which  contradicts  (\ref{rearrange-TM-NT}). Hence (\ref{rearrange-TM-NT-1}) is true.

\vskip 2pt
In summary, we end the proof of this proposition.
\end{proof}

\subsection{Proof of Theorem~\ref{theorem1.5-new}}
Let $y_0 \in L^2(\Omega)$ and $Q \in \mathcal F$ satisfying (\ref{y0-ball}). We prove the conclusions (i)-(iii) one by one.

(i) Arbitrarily fix $(M,T)$ so that
\begin{eqnarray}\label{0608-new-1}
(M,T)\in (\mathcal GT)_{y_0,Q} \setminus (\mathcal KN)_{y_0,Q}.
\end{eqnarray}
Then it follows  from  (\ref{def-GT}) that
\begin{eqnarray}\label{0608-new-2}
 0<T=T(M,y_0,Q)<+\infty.
\end{eqnarray}

CLAIM ONE: The problems $(NP)^{T,y_0}_{Q}$ and $(TP)^{M,y_0}_{Q}$  have minimal norm and minimal time controls, respectively. Indeed, since $0<T<+\infty$ (see (\ref{0608-new-2})), it follows from Theorem \ref{Lemma-exist-op-NP} that $(NP)^{T,y_0}_{Q}$ has at least one minimal norm control. Meanwhile, since $T(M,y_0,Q)<+\infty$ (see (\ref{0608-new-2})), it follows from (ii) of Definition~\ref{w-definition1.1} that $(TP)^{M,y_0}_{Q}$ has at least one admissible control. Then by a standard way (see, for instance,  the proof of \cite[Lemma 1.1]{HOF1}), we can prove that $(TP)^{M,y_0}_{Q}$ has at least one minimal time control.

 CLAIM TWO: For an arbitrarily fixed minimal time control $u^*_1$ to $(TP)^{M,y_0}_{Q}$,
 $u^*_1|_{(0,T)}$ is
  a minimal norm control to $(NP)^{T,y_0}_{Q}$. Indeed, by the optimality of $u^*_1$ and (\ref{0608-new-2}), we have that
\begin{eqnarray}\label{proof-1-3}
 y(T; y_0,u_1^*)=y(T(M,y_0,Q);y_0,u_1^*)\in Q
 \;\;\mbox{and}\;\;
  \|u_1^*\|_{L^\infty(\mathbb R^+;L^2(\Omega))}\leq M.
\end{eqnarray}
By the first conclusion in (\ref{proof-1-3}), we see that $u_1^*|_{(0,T)}$ is an admissible control to $(NP)^{T,y_0}_{Q}$. This, along with the optimality of $N(T,y_0,Q)$ and the second conclusion in (\ref{proof-1-3}), yields that
\begin{equation}\label{wang3.9}
N(T,y_0,Q)\leq \|u_1^*|_{(0,T)}\|_{L^\infty(0,T;L^2(\Omega))}\leq  M.
\end{equation}
Meanwhile, by  (\ref{0608-new-2}), we can apply (ii) of Proposition \ref{Proposition-N-T} to find that
\begin{eqnarray}\label{proof-1-6-1}
N(T,y_0,Q)=N(T(M,y_0,Q),y_0,Q ) = M.
\end{eqnarray}
From (\ref{wang3.9}) and (\ref{proof-1-6-1}), we see that
\begin{eqnarray*}\label{proof-1-5}
\|u_1^*|_{(0,T)}\|_{L^\infty(0,T;L^2(\Omega))} = N(T,y_0,Q).
\end{eqnarray*}
Since $u_1^*|_{(0,T)}$ is an admissible control to $(NP)^{T,y_0}_{Q}$, the above shows that $u_1^*|_{(0,T)}$ is a minimal norm control to $(NP)^{T,y_0}_{Q}$.

CLAIM THREE: For an arbitrarily fixed minimal norm control  $v_1^*$ to $(NP)^{T,y_0}_{Q}$,
the zero extension of $v_1^*$ over $\mathbb{R}^+$, denoted by $\widetilde v_1^*$, is a minimal time control to $(TP)^{M,y_0}_{Q}$. Indeed, by the optimality of $v_1^*$, one can easily check that
\begin{eqnarray*}\label{proof-1-6}
  y(T;y_0,\widetilde v_1^*)\in Q
 \;\;\mbox{and}\;\;
 \|\widetilde v_1^*\|_{L^\infty(\mathbb{R}^+;L^2(\Omega))}=N(T,y_0,Q).
\end{eqnarray*}
From this, (\ref{0608-new-2}), (\ref{proof-1-6-1}) and (i) of Definition~\ref{w-definition1.1}, we find that  $\widetilde v_1^*$ is a minimal time control to $(TP)^{M,y_0}_{Q}$.

Now,  by the above three claims and Definition~\ref{Def-1}, we see that the problems $(TP)^{M,y_0}_{Q}$ and $(NP)^{T,y_0}_{Q}$ are equivalent.

Finally,  we claim that
\begin{eqnarray}\label{0611-aa}
 N(T,y_0,Q) \neq 0.
\end{eqnarray}
When  (\ref{0611-aa}) is proved, we can easily check that  the null control
 (defined on $(0,T)$) is not a minimal norm control to $(NP)^{T,y_0}_{Q}$.
  Then by the  equivalence of $(TP)^{M,y_0}_{Q}$ and $(NP)^{T,y_0}_{Q}$, we can easily prove that the null control
  (defined on $\mathbb{R}^+$) is not a minimal time control to $(TP)^{M,y_0}_{Q}$.

The remainder is to show (\ref{0611-aa}). By contradiction, suppose that (\ref{0611-aa}) were not true. Then we would have that  $N(T,y_0,Q) =0$. This, together with  (\ref{proof-1-6-1}), yields that $M=0$. From this and (\ref{def-KN}), we get that $(M,T)\in (\mathcal KN)_{y_0,Q}$, which contradicts (\ref{0608-new-1}). Therefore, (\ref{0611-aa}) is true.
 This ends the proof of the conclusion (i) in Theorem~\ref{theorem1.5-new}.

(ii) Without loss of generality, we can assume that
$(\mathcal KN)_{y_0,Q}  \neq \emptyset$.
Arbitrarily fix
\begin{eqnarray}\label{0608-new-ii-1}
(M,T)\in  (\mathcal KN)_{y_0,Q}.
\end{eqnarray}
By Definition~\ref{Def-1}, we see that in order to prove the conclusion (ii),  it suffices to show that the null controls (defined on $(0,T)$ and $\mathbb{R}^+$, respectively) are the unique minimal norm control and
the unique minimal time control to $(NP)^{T,y_0}_{Q}$ and $(TP)^{M,y_0}_{Q}$, respectively. To this end, we observe from  (\ref{0608-new-ii-1}) and  (\ref{def-KN}) that
\begin{eqnarray}\label{0608-new-ii-2}
 0<T<+\infty,\;
 M=0  \;\;\mbox{and}\;\;
 N(T,y_0,Q)=0.
\end{eqnarray}
By the first conclusion of (\ref{0608-new-ii-2}) and Theorem \ref{Lemma-exist-op-NP}, we see that $(NP)^{T,y_0}_{Q}$ has at least one minimal norm control. This, along with the last conclusion in (\ref{0608-new-ii-2}),  implies that   the null control (defined on $(0,T)$) is the unique minimal norm control to $(NP)^{T,y_0}_{Q}$. From this, it follows that
$\hat y(T;y_0,0)\in Q$, from which, one can easily check that the null control (defined on $\mathbb{R}^+$) is admissible for $(TP)^{M,y_0}_{Q}$. Then by a standard way (see, for instance,  the proof of \cite[Lemma 1.1]{HOF1}), we can prove that $(TP)^{M,y_0}_{Q}$ has at least one minimal time control. This, along with the second conclusion in (\ref{0608-new-ii-2}), indicates that  the null control is the unique minimal time control to $(TP)^{M,y_0}_{Q}$.
  This ends the proof of the conclusion (ii) in Theorem~\ref{theorem1.5-new}.

\vskip 5pt
(iii)
By contradiction,  suppose that the conclusion (iii) were not true. Then
 there would be a pair
\begin{eqnarray}\label{0608-new-iii-1}
(M,T)\in [0, +\infty)\times (0, +\infty) \setminus\big( (\mathcal GT)_{y_0,Q} \cup (\mathcal KN)_{y_0,Q} \big)
\end{eqnarray}
so that $(TP)^{M,y_0}_{Q}$ and $(NP)^{T,y_0}_{Q}$ are equivalent.
The key to get a contradiction is to prove that
\begin{eqnarray}\label{0608-new-iii-2}
 T(M,y_0,Q) = T.
\end{eqnarray}
When this  is proved, we see from
 (\ref{0608-new-iii-2}) and  (\ref{def-GT}) that
$ (M,T)\in  (\mathcal GT)_{y_0,Q}$. (Notice that $0<T<+\infty$ and $0\leq M<+\infty$.) This contradicts (\ref{0608-new-iii-1}).
Hence, the conclusion (iii) is true.

We now  prove (\ref{0608-new-iii-2}). Since $(TP)^{M,y_0}_{Q}$ and $(NP)^{T,y_0}_{Q}$ are equivalent,
two facts are derived from  Definition~\ref{Def-1}: First, $(NP)^{T,y_0}_{Q}$ has a minimal norm control
 $v_2^*$; Second, the zero extension of $v_2^*$ over $\mathbb{R}^+$, denoted by $\widetilde v_2^*$,  is a minimal time control to $(TP)^{M,y_0}_{Q}$. From these two facts, we can easily check that $ T(M,y_0,Q) \leq T$.
 This, along with (i) of Proposition \ref{Proposition-N-T}, shows that
  \begin{eqnarray}\label{0608-new-iii-4}
   0<T(M,y_0,Q) \leq T.
 \end{eqnarray}

 By contradiction, we suppose that (\ref{0608-new-iii-2}) were not true. Then by (\ref{0608-new-iii-4}) and (\ref{0608-new-iii-1}), we would have that
 \begin{eqnarray}\label{0608-new-iii-5}
  0<T(M,y_0,Q) < T < +\infty.
 \end{eqnarray}
  It follows from (\ref{0608-new-iii-5}) and (ii) of Definition~\ref{w-definition1.1} that $(TP)^{M,y_0}_{Q}$ has at least one admissible control. Then by a standard way (see, for instance,  the proof of \cite[Lemma 1.1]{HOF1}), we can prove that $(TP)^{M,y_0}_{Q}$ has a minimal time control $u^*$. Thus we have that
 \begin{eqnarray}\label{0608-new-iii-6}
 y(T(M,y_0,Q);y_0,u^*)\in Q
 \;\;\mbox{and}\;\;
  \|u^*\|_{L^\infty(\mathbb R^+;L^2(\Omega))}\leq M.
\end{eqnarray}
Arbitrarily take $\hat z \in L^2(\Omega)\setminus\{0\}$. Define
\begin{eqnarray*}
 \hat u_0^*(t)=
 \left\{\begin{array}{l}
 u^*(t),~t\in \big(0,T(M,y_0,Q)\big),\\
 0,~t\in [T(M,y_0,Q), +\infty);
 \end{array}
 \right.
 \;\;
 \hat u_M^*(t)=
 \left\{\begin{array}{l}
 u^*(t),~t\in \big(0,T(M,y_0,Q)\big),\\
 M \frac{\hat z}{\|\hat z\|},~t\in [T(M,y_0,Q), +\infty).
 \end{array}
 \right.
\end{eqnarray*}
From these and (\ref{0608-new-iii-6}), we see that $\hat u_0^*$ and $\hat u_M^*$ are minimal time controls to $(TP)^{M,y_0}_{Q}$. By the equivalence of  $(TP)^{M,y_0}_{Q}$ and $(NP)^{T,y_0}_{Q}$, we find that  $\hat u_0^*|_{(0,T)}$ and $\hat u_M^*|_{(0,T)}$ are minimal norm controls to $(NP)^{T,y_0}_{Q}$. This, along with  (\ref{0608-new-iii-5}) and Theorem \ref{Lemma-bang-bang-NP}, indicates that for a.e. $t\in
  \big(T(M,y_0,Q), T \big)$,
 \begin{eqnarray*}
   N(T,y_0,Q)=\|\hat u_0^*(t)\|=0 \;\;\mbox{and}\;\; N(T,y_0,Q)=\|\hat u_M^*(t)\|=M.
 \end{eqnarray*}
 From these, it follows that $N(T,y_0,Q)=M=0$.
  This, along with  (\ref{def-KN}), yields that
 $(M,T)\in  (\mathcal KN)_{y_0,Q}$,
which contradicts (\ref{0608-new-iii-1}). Thus, (\ref{0608-new-iii-2}) is true.
This ends the proof of the conclusion (iii) in Theorem~\ref{theorem1.5-new}.

In summary, we conclude that  the conclusions (i), (ii) and (iii) are true.
This completes the proof of Theorem~\ref{theorem1.5-new}.

\section{Further illustrations }

The aim of  this section is to construct an example where the minimal norm function $T\rightarrow N(T,y_0,Q)$ is not  decreasing, the set $(\mathcal KN)_{y_0,Q}$ is not empty and the set $(\mathcal GT)_{y_0,Q}$ is not connected.
 This example may help us to understand Theorem~\ref{theorem1.5-new} better.

\begin{theorem}\label{theorem4-3}
There exists $Q\in \mathcal F$ and $y_0 \in L^2(\Omega)\setminus Q$ so that
 the following propositions are true:\\
 (i) The function $T\rightarrow N(T, y_0, Q)$ is not   decreasing;\\
(ii) The set $(\mathcal KN)_{y_0,Q} \neq \emptyset$ and the set $(\mathcal GT)_{y_0,Q}$ is not connected.

\end{theorem}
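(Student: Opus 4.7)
The plan is to exhibit explicit $y_0 \in L^2(\Omega)$ and $Q = \overline{B}_{L^2(\Omega)}(y_*, r) \in \mathcal F$ with $y_0 \notin Q$ enjoying all three required properties. The construction arranges two geometric features: (a) the free trajectory $t \mapsto e^{\Delta t} y_0$ enters $\mathrm{Int}\,Q$ at some time $t_a > 0$ and exits at $t_b > t_a$ without returning; and (b) at some earlier time $t_* \in (0, t_a)$ the distance $d(t) := \|e^{\Delta t} y_0 - y_*\|$ attains a strict local minimum whose value is slightly greater than $r$, i.e., the trajectory makes a nondegenerate near-miss of $Q$ before its genuine entry.

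Feature (a) gives both the nonemptiness of $(\mathcal{KN})_{y_0, Q}$ and assertion (i). Indeed, since $e^{\Delta t} y_0 \in Q$ for every $t \in [t_a, t_b]$, the null control is admissible for $(NP)^{t, y_0}_Q$, so $N(t, y_0, Q) = 0$ there and $(\mathcal{KN})_{y_0, Q} \supset \{0\} \times [t_a, t_b] \neq \emptyset$. For $t > t_b$ the free trajectory is outside $Q$, so the null control is not admissible, and the bang-bang property (Theorem~\ref{Lemma-bang-bang-NP}) then forces $N(t, y_0, Q) > 0 = N(t_a, y_0, Q)$; hence $T \mapsto N(T, y_0, Q)$ is not decreasing.

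For the disconnectedness of $(\mathcal{GT})_{y_0, Q}$, I would first transfer the local-minimum structure of $d$ to $N$: from (b) one obtains that $M_* := N(t_*, y_0, Q) > 0$ is a strict local minimum of $N(\cdot, y_0, Q)$ on $(0, t_a)$, with some $t'_* \in (t_*, t_a)$ satisfying $N(t, y_0, Q) > M_*$ on $(0, t_*) \cup (t_*, t'_*)$ and $N(t'_*, y_0, Q) = M_*$. Combined with Theorem~\ref{Theorem-dis-TP-NP} (giving $T(M, y_0, Q) = \inf\{t > 0 : N(t, y_0, Q) \leq M\}$) and the Lipschitz continuity from Theorem~\ref{Theorem-Lip-NP}, this yields $T(M_*, y_0, Q) = t_*$, while for any $M \in (0, M_*)$ the set $\{t > 0 : N(t, y_0, Q) \leq M\}$ is disjoint from $(0, t'_*)$, so $T(M, y_0, Q) \geq t'_* > t_*$, and $\lim_{M \uparrow M_*^-} T(M, y_0, Q) = t'_*$. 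Thus the non-increasing function $T(\cdot, y_0, Q)$ jumps at $M_*$, and the graph $(\mathcal{GT})_{y_0, Q}$ splits into the subset with $M \geq M_*$ (contained in $[0, \infty) \times (0, t_*]$) and the subset with $0 \leq M < M_*$ (contained in $[0, \infty) \times [t'_*, \infty)$), separated by the nonempty open horizontal strip $[0, \infty) \times (t_*, t'_*)$; hence $(\mathcal{GT})_{y_0, Q}$ is disconnected.

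The main obstacle has two layers. First, feature (b) is a nontrivial geometric condition on the trajectory: a natural candidate is $y_0 = \phi_1 + \alpha \phi_2$ and $y_* = c_1 \phi_1 + c_2 \phi_2$ in the Dirichlet eigenbasis $\{\phi_j\}$ with eigenvalues $\{\lambda_j\}$, and $\alpha, c_1, c_2, r$ chosen so that the polynomial in $u = e^{-\lambda_1 t}$ characterizing the critical points of $d^2(t) = (e^{-\lambda_1 t} - c_1)^2 + (\alpha e^{-\lambda_2 t} - c_2)^2$ has three positive roots with the required ordering (two local minima sandwiching a local maximum, the earlier minimum strictly above $r^2$ and the later one strictly below). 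Second, transferring the local-minimum structure from $d$ (geometric closeness of the free trajectory to $y_*$) to $N$ (an infimum of an infinite-dimensional optimization) is subtle because the reachable set from $0$ also varies with $t$; via the Hahn--Banach dual characterization that appears in the proof of Theorem~\ref{Lemma-bang-bang-NP}, combined with the Lipschitz estimate of Theorem~\ref{Theorem-Lip-NP}, the minimization defining $N$ near $t_*$ can be reduced to a finite-dimensional perturbation from which the inheritance of the strict local minimum is read off directly.
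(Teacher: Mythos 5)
Your overall architecture is sound and, at the level of the final jump argument, coincides with the paper's: your part (a) gives $(\mathcal KN)_{y_0,Q}\neq\emptyset$ and the failure of monotonicity, and your derivation of disconnectedness of $(\mathcal GT)_{y_0,Q}$ from $T(M,y_0,Q)=\inf\{t>0:N(t,y_0,Q)\le M\}$ (Theorem~\ref{Theorem-dis-TP-NP}) together with continuity of $N$ (Theorem~\ref{Theorem-Lip-NP}) is exactly the mechanism of the paper's Step 2. The genuine gap is the step you defer to the last paragraph: transferring the strict-local-minimum structure of the distance $d(t)=\|e^{\Delta t}y_0-y_*\|$ to the minimal norm function $N(\cdot,y_0,Q)$. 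Neither of the tools you cite can accomplish this. The Lipschitz estimate of Theorem~\ref{Theorem-Lip-NP} bounds $|N(t)-N(s)|$ only from \emph{above}, so it can never show that $N$ rises after the near miss, nor that $N(t)>M_*$ on all of $(0,t_*)$; and the Hahn--Banach dual characterization in the proof of Theorem~\ref{Lemma-bang-bang-NP} describes the structure of optimal controls, not the size of $N$. What is actually needed is a two-sided quantitative comparison: a lower bound such as $N(t)\ge \mathrm{dist}(e^{\Delta t}y_0,Q)/\|L_t\|$, where $L_t$ is the input-to-state map over $(0,t)$, together with an upper bound $N(t_*)\le C\epsilon$ at the near miss (obtainable by a first-order argument with a small control $\delta\hat v$ satisfying $\langle L_{t_*}\hat v,\,y_*-e^{\Delta t_*}y_0\rangle>0$), and then a scaling choice making the miss margin $\epsilon$ much smaller than the hump height so the dip survives the discrepancy between the constants. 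Moreover, your specific claim that $M_*=N(t_*,y_0,Q)$ is a strict local minimum of $N$ \emph{at} $t_*$ is doubtful: the controllability cost decreases in $t$, and in the heuristic model $N\approx c(t)\,(d(t)-r)$ with $c'<0$ one gets $N'(t_*)\approx c'(t_*)\,\epsilon<0$, so the dip of $N$ sits strictly to the right of $t_*$. You should instead argue, as the paper does, with $M_0\triangleq\inf_{\tau_1\le t\le\tau_2}N(t,y_0,Q)$ attained at an interior point $\hat T$, which requires only continuity and strict inequalities at the endpoints, not identification of the minimizer.

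It is worth seeing how the paper evades this difficulty entirely, because the contrast explains why your inheritance step is the hard part. Rather than engineering a quantitative near miss, the paper makes the free trajectory \emph{touch} the target at exactly two times $T_1<T_2$: it places $y_0$ so that $t\mapsto e^{\Delta t}y_0$ runs along the curve $x_2=x_1^{\alpha}$ (with $\alpha=\lambda_k/\lambda_1\ge 2$) inside a two-dimensional eigenspace, and takes the target to be the convex hull of two disks tangent to that curve (Lemma~\ref{lemma4-2}), direct-summed with the unit ball of the orthogonal complement. Then $N(T_1)=N(T_2)=0$ exactly, the hump at $\tau_2\in(T_1,T_2)$ is automatically positive since the orbit meets the target only at the tangency points, and the blow-up $\lim_{t\to 0^+}N(t,y_0,Q)=+\infty$ supplies the left wall --- all without comparing $N$ at different times. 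The positive dip (needed so that $M_0>0$ and the graph genuinely jumps) is then produced by shrinking the first disk slightly and invoking the continuity of $N$ with respect to the target set, uniformly in $t$ on compact intervals (Lemma~\ref{theorem4-1}), again avoiding any estimate of how $N$ varies in $t$. Your ball-target construction is likely salvageable along the lines sketched above, but as written the transfer from $d$ to $N$ is asserted rather than proved, and the cited tools are insufficient to prove it.
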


To prove the above theorem, we need some preliminaries. The following lemma concerns with some kind of continuity of the map
$Q\rightarrow N(T,y_0,Q)$.

\begin{lemma}\label{theorem4-1}
Let $E\subset  L^2(\Omega)$ be a finite dimension subspace, with its orthogonal space $E^\perp$.  Suppose that  $\{S_n\}_{n=1}^{+\infty}\subset E$  is  an increasing sequence of bounded closed convex  subsets. Assume that  $ S \triangleq \cup_{n=1}^{+\infty} S_n$ verifies that
\begin{eqnarray}\label{theorem4-q-1}
\hat Q\triangleq \bar S \oplus B_{E^\perp} \in \mathcal F,
\end{eqnarray}
 where $\bar S$ and $B_{E^\perp}$ denote the closure of $S$ in $E$ and the closed unit ball in $E^\perp$, respectively. Let
 \begin{eqnarray}\label{wang3.2}
 Q_n\triangleq S_n \oplus B_{E^\perp}\;\;\mbox{for each}\;\;n\in\mathbb N^+.
 \end{eqnarray}
 Then for all $n$ large enough, $Q_n\in \mathcal{F}$; and for all $y_0\in L^2(\Omega)$ and for all  $a$ and $b$, with $0 < a < b < +\infty$,
\begin{eqnarray}\label{theorem4-1-a}
 N(T, y_{0}, Q_n)  \rightarrow N(T, y_0,\hat Q)  \mbox{ uniformly w.r.t. } T\in[a,b],\;\;\mbox{as}\;\; n\rightarrow\infty.
\end{eqnarray}
\end{lemma}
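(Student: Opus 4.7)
The plan is to reduce the lemma to three subgoals: (a) showing $Q_n \in \mathcal{F}$ for all sufficiently large $n$; (b) proving pointwise convergence $N(T, y_0, Q_n) \to N(T, y_0, \hat Q)$ for each fixed $T$; and (c) upgrading to uniform convergence on $[a, b]$ via Dini's theorem. Monotonicity plays a central role throughout: $Q_n \subset Q_{n+1} \subset \hat Q$ forces the sequence of minimal norms to be non-increasing and bounded below by $N(T, y_0, \hat Q)$.

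For (a), since $\hat Q \in \mathcal{F}$, the orthogonal decomposition $L^2(\Omega) = E \oplus E^\perp$ gives $\mathrm{Int}(\hat Q) = \mathrm{Int}_E(\bar S) \oplus \{z \in E^\perp : \|z\| < 1\}$, so $\mathrm{Int}_E(\bar S) \neq \emptyset$. The standard convex-analysis fact that the interiors of a convex set and of its closure agree when either is nonempty yields $\mathrm{Int}_E(S) = \mathrm{Int}_E(\bar S) \neq \emptyset$, so one can fix a nonempty open ball $U \subset S$ in $E$. Writing $U = \cup_n (U \cap S_n)$ as a countable union of closed subsets of the locally compact space $U$, Baire's theorem produces an index $n_0$ such that $U \cap S_{n_0}$ has nonempty interior in $E$; monotonicity of the sequence then ensures $S_n$, and hence $Q_n = S_n \oplus B_{E^\perp}$, has nonempty interior for every $n \geq n_0$, so $Q_n \in \mathcal{F}$.

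For (b), fix $T > 0$, take a minimal norm control $v^*$ for $(NP)^{T, y_0}_{\hat Q}$ with $p^* = \hat y(T; y_0, v^*)$ (Theorem~\ref{Lemma-exist-op-NP}), and use the approximate controllability of the heat equation (the reachable set from $y_0$ is dense in $L^2(\Omega)$) together with the openness of $\mathrm{Int}(\hat Q)$ to pick $v_0 \in L^\infty(0, T; L^2(\Omega))$ whose terminal state $p_0$ lies in $\mathrm{Int}(\hat Q)$. For $\lambda \in (0, 1]$, the convex combination $v_\lambda := (1 - \lambda) v^* + \lambda v_0$ steers $y_0$ to $p_\lambda := (1 - \lambda) p^* + \lambda p_0 \in \mathrm{Int}(\hat Q)$, and the $E$-component $s_\lambda$ of $p_\lambda$ therefore belongs to $\mathrm{Int}_E(\bar S) = \mathrm{Int}_E(S) \subset \cup_n S_n$. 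Hence $s_\lambda \in S_{n(\lambda)}$ for some index $n(\lambda)$, and for every $n \geq n(\lambda)$ the control $v_\lambda$ is admissible for $(NP)^{T, y_0}_{Q_n}$, giving
\begin{equation*}
N(T, y_0, Q_n) \leq \|v_\lambda\|_{L^\infty(0,T;L^2(\Omega))} \leq (1 - \lambda) N(T, y_0, \hat Q) + \lambda \|v_0\|_{L^\infty(0,T;L^2(\Omega))}.
\end{equation*}
Letting $n \to \infty$ and then $\lambda \to 0^+$ matches the lower bound and yields pointwise convergence.

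For (c), each $T \mapsto N(T, y_0, Q_n)$ (with $n \geq n_0$) and the pointwise limit $T \mapsto N(T, y_0, \hat Q)$ are continuous on $[a, b]$ by Theorem~\ref{Theorem-Lip-NP}, and the convergence is monotone on this compact interval, so Dini's theorem upgrades it to uniform convergence, proving~\eqref{theorem4-1-a}. The most delicate step is (b): $p^*$ may lie on $\partial \hat Q$, so $p^*$ itself could fail to belong to any $Q_n$; the auxiliary interior target $p_0$, obtained via approximate controllability, creates the slack needed for the convex combination $p_\lambda$ to be captured by $Q_n$ for all large $n$.
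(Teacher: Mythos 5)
Your proof is correct, and it reaches the uniform convergence (\ref{theorem4-1-a}) by a genuinely different route than the paper. For the first assertion ($Q_n\in\mathcal{F}$ for large $n$) you and the paper argue essentially identically (nonempty interior of $\bar S$ in $E$, finite-dimensional convexity to pass from $\bar S$ to $S$, Baire category); your restriction of Baire's theorem to an open ball $U\subset S$ is in fact slightly cleaner than the paper's application to the union $S$ itself. The real divergence is in the convergence proof. The paper works quantitatively and uniformly in $T$ from the start: it builds a special interior point $\hat y_d=e^{\Delta T}\hat y_{0,T}$ of $\hat Q$ by spectral truncation, with $\|\hat y_{0,T}\|$ bounded uniformly for $T\in[a,b]$ (its Step 1); proves the purely geometric inclusion $\lambda(\hat Q-\{\hat y_d\})\subset Q_n-\{\hat y_d\}$ for all $n\geq N_\lambda$ with $N_\lambda$ independent of $T$, via Hahn--Banach separation and finite-dimensional compactness (its (\ref{lemma4-1-85-1})); and then corrects the scaled control $\lambda u^*$ by an $L^\infty$-null control supported in $(0,a)$, obtaining the explicit bound $N(T,y_0,Q_n)-N(T,y_0,\hat Q)\leq C(\Omega,\omega,a)(1-\lambda)\|y_0-\hat y_{0,T}\|$ uniformly in $T$. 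You instead prove only pointwise convergence---using an auxiliary interior terminal state $p_0$ from approximate controllability so that the convex combination $p_\lambda$ lands in $\mathrm{Int}\,\hat Q$, whence its $E$-component lies in $\mathrm{Int}_E(\bar S)=\mathrm{Int}_E(S)$ (legitimate since $E$ is finite dimensional and $S$ is convex with nonempty interior) and is captured by a single $S_{n(\lambda)}$---and then delegate all uniformity in $T$ to Dini's theorem, using the nestedness $Q_n\subset Q_{n+1}\subset\hat Q$ for monotonicity of $n\mapsto N(T,y_0,Q_n)$ and Theorem~\ref{Theorem-Lip-NP} (already established in Section 2 independently of this lemma, so there is no circularity) for continuity of each $N(\cdot,y_0,Q_n)$, $n\geq n_0$, and of the limit $N(\cdot,y_0,\hat Q)$. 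The supporting facts you invoke are all sound: the segment from a point of a convex set to an interior point stays in the interior, $\mathrm{Int}(\bar S\oplus B_{E^\perp})$ splits as the product of interiors under the orthogonal decomposition $L^2(\Omega)=E\oplus E^\perp$, and the reachable set with $L^\infty(0,T;L^2(\Omega))$ controls is dense, so it meets $\mathrm{Int}\,\hat Q$. What each approach buys: yours avoids both the null-controllability estimate and the delicate separation argument inside this proof, needing only that each fixed interior point is eventually in $S_n$ rather than a uniform inclusion of the whole shrunken set, at the cost of being purely qualitative; the paper's argument yields a quantitative error bound with all constants independent of $T\in[a,b]$, and stands on its own without appealing to the continuity theorem or Dini.
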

\begin{proof}
 By using \cite[Theorem 1.1.14]{Schneider} and (\ref{theorem4-q-1}) and the definition of $\mathcal F$, we see  that $\bar S$ has a nonempty interior in $E$.
 Then, by the finite dimensionality and the convexity of $S$, one can easily check
 that $S$ has a nonempty interior in $E$.
  Since $S=\cup_{n=1}^\infty S_n \subset E$, it follows from the Baire Category Theorem that $S_{N_0}$ has a nonempty interior in $E$ for some $N_0\in\mathbb N^+$. Thus, by the monotonicity of $\{S_n\}_{n=1}^\infty$ and the definition of
     $Q_n$,  there exists a closed ball  $ B_r(y_d)$  in $L^2(\Omega)$, centered at $y_d \in L^2(\Omega)$ and of radius $r>0$, so that
\begin{eqnarray}\label{0815-th3.2-yd}
 B_r(y_d) \subset Q_{N_0} \subset Q_n
 \;\;\mbox{for all}\;\;
 n\geq N_0.
\end{eqnarray}
From (\ref{wang3.2}) and (\ref{0815-th3.2-yd}), we see that when $n\geq N_0$,
$Q_n\in \mathcal{F}$.

Now we arbitrarily fix $y_0\in L^2(\Omega)$ and $0 < a < b < +\infty$. Then arbitrarily fix $T \in[a,b]$. The rest proof is divided into the following four steps.

  \textit{Step 1. To show that there exists $C \triangleq C(\Omega,b,y_d,r)>0$ (independent of $T\in [a,b]$),
  $\hat y_{0,T}\in L^2(\Omega)$  and
  $\hat y_d\in L^2(\Omega)$ (independent of $T\in [a,b]$) so that
\begin{eqnarray}\label{lemma4-1-3}
\|\hat y_d-y_d\| \leq r/2,~
\hat y_d = e^{\Delta T} \hat y_{0,T}
\;\;\mbox{and}\;\;
\|\hat y_{0,T} \| \leq C
  \end{eqnarray}}
$\quad~\,$Let $\{\lambda_j\}_{j=1}^{+\infty}$ be the family of all eigenvalues of $-\Delta$ with the zero Dirichlet boundary condition so that
\begin{equation}\label{lemma4-1-25}
  0< \lambda_1 \leq \lambda_2 \leq \cdots \;\;\mbox{and}\;\;
  \lim_{j\rightarrow +\infty}\lambda_j= \infty.
\end{equation}
Write  $\{e_j\}_{j=1}^{+\infty}$ for the family of  the corresponding normalized eigenvectors. Since $ y_d \in L^2(\Omega)$, we can choose a positive integer $K\triangleq K(y_d,r)$  large enough so that $\Sigma_{j=K+1}^\infty \langle  y_d, e_j \rangle^2 \leq  r^2/4$. Let $\hat y_d \triangleq \Sigma^{K}_{j=1}   \langle  y_d, e_j \rangle e_j$ and $\hat y_{0,T}\triangleq \Sigma^{K}_{j=1}  e^{\lambda_j T} \langle  y_d, e_j \rangle e_j$. Then, one can easily check that $\hat y_d$ and $y_{0,T}$ verify (\ref{lemma4-1-3}) for some $C \triangleq C(\Omega,b,y_d,r)>0$.

\textit{Step 2. To prove that for each $n \geq N_0$,
\begin{eqnarray}\label{lemma4-1-12}
  N(T, y_0, \hat Q)  \leq  N(T, y_{0}, Q_n)
\end{eqnarray} }
$\quad~\,$Since $Q_n\in \mathcal{F}$ for each $n \geq N_0$, we find
from Theorem \ref{Lemma-exist-op-NP}
that for each $n \geq N_0$, $(NP)^{T, y_{0}}_{Q_n}$ has at least one minimal norm control. Since $S_n\subset \bar S$ for each $n$, we have that $Q_n\subset \hat Q$ for all $n\in\mathbb N^+$. Thus, when $n\geq N_0$,
each minimal norm control to $(NP)^{T, y_{0}}_{Q_n}$ is an admissible control to  $(NP)^{T,y_{0}}_{\hat Q}$. This, along with the optimality of $N(T, y_0, \hat Q)$, leads to  (\ref{lemma4-1-12}).

\vskip5pt
\textit{Step 3. To prove that  for each $\lambda \in(0,1)$, there is $N_\lambda \geq N_0$ (independent of $T\in [a,b]$) so that when $n \geq N_\lambda$,
\begin{eqnarray}\label{lemma4-1-4}
  N(T, y_{0}, Q_n)  - N(T,  y_0, \hat Q)
 \leq  C(\Omega,\omega,a)  (1-\lambda)\| y_0- \hat y_{0,T}\|
\end{eqnarray}
for some $ C(\Omega,\omega,a)>0$ independent of $T\in [a,b]$ and $\lambda \in (0,1)$, where $\hat y_{0,T}$ is given by Step 1}

Let $u^*$ be a minimal norm control to $(NP)^{T, y_0}_{\hat Q}$. (Its existence
is ensured by (\ref{theorem4-q-1}) and Theorem \ref{Lemma-exist-op-NP}.)
Then
\begin{equation}\label{lemma4-1-5}
  \hat y(T;  y_0, u^*) \in \hat Q
  \;\;\mbox{and}\;\;
  \|u^*\|_{L^\infty(0,T;L^2(\Omega))} = N(T,  y_0, \hat Q).
\end{equation}
Let $\hat y_d$ be given by Step 1. Arbitrarily fix $\lambda \in(0,1)$.
At the end of the proof of this lemma, we will
prove the following conclusion: There is  $N_\lambda \geq N_0$ so that
\begin{eqnarray}\label{lemma4-1-85-1}
 \lambda (\hat Q-\{\hat y_d\}) \subset Q_n-\{\hat y_d\},\;\;\mbox{when}\;\;n \geq N_\lambda.
\end{eqnarray}
 We now suppose that (\ref{lemma4-1-85-1}) is true.
Then, arbitrarily fix $n\geq N_\lambda$. From the second conclusion in (\ref{lemma4-1-3}),
 the first conclusion in (\ref{lemma4-1-5}) and (\ref{lemma4-1-85-1}), we find that \begin{eqnarray}\label{lemma4-1-7}
\hat y(T; \lambda (y_0-\hat y_{0,T}), \lambda u^*)
=\lambda (\hat y(T;y_0,u^*)-\hat y_d)
\in  Q_n-\{\hat y_d\}.
 \end{eqnarray}
Meanwhile,  since $a\leq T\leq b$, by the $L^{\infty}$-null controllability for the heat equation (see \cite[Proposition 3.2]{FZ}), there is
$v_n \in {L^\infty(0,T;L^2(\Omega)})$, with supp\,$v_n\subset(0,a)$, so that
\begin{eqnarray}\label{lemma4-1-8-1}
\hat y(T; (1-\lambda)( y_0-\hat y_{0,T}), v_n )=0
\end{eqnarray}
and so that
\begin{eqnarray}\label{lemma4-1-8-2}
~~~~~\|v_n\|_{L^\infty(0,T;L^2(\Omega))}
\leq C(\Omega,\omega,a)(1-\lambda)\| y_0-\hat y_{0,T}\|\;\;\mbox{for some}\;C(\Omega, \omega, a)>0.
\end{eqnarray}
Now, it follows from the second conclusion in (\ref{lemma4-1-3}), (\ref{lemma4-1-8-1}) and (\ref{lemma4-1-7}) that
\begin{eqnarray*}\label{lemma4-1-9}
 \hat y(T;y_{0}, \lambda u^*+v_n)
&=& e^{\Delta T} \hat y_{0,T}
+ \hat y(T;y_{0}-\hat y_{0,T}, \lambda u^*+v_n)
\nonumber\\
&=& \hat y_d+\hat y(T;\lambda ( y_0-\hat y_{0,T}), \lambda u^*)
\in Q_n.
\end{eqnarray*}
Thus, $\lambda u^*+v_n$ is an admissible control to $(NP)^{T,y_{0}}_{Q_n}$,
which, along with  the optimality of $N(T, y_{0}, Q_n)$, yields that
\begin{eqnarray*}\label{lemma4-1-10}
  N(T, y_{0}, Q_n) \leq \|\lambda u^*+v_n\|_{L^\infty(0,T;L^2(\Omega))}.
\end{eqnarray*}
This, together with the second conclusion in (\ref{lemma4-1-5}) and  (\ref{lemma4-1-8-2}), yields that
\begin{eqnarray*}
   N(T, y_{0}, Q_n)
  \leq \lambda N(T,  y_0, \hat Q)
  + C(\Omega,\omega,a) (1-\lambda)\| y_0-\hat y_{0,T}\|.
\end{eqnarray*}
Since $\lambda\in(0,1)$, the above inequality leads to (\ref{lemma4-1-4}). This ends the proof of Step 3.

\textit{Step 4. To verify (\ref{theorem4-1-a})}

Given $\varepsilon\in (0,1)$, it follows  by (\ref{lemma4-1-4}) and (\ref{lemma4-1-12}) that when $n\geq N_{1-\varepsilon}$ (where $N_{1-\varepsilon}$ is given by Step 3, with $\lambda=1-\varepsilon$),
    \begin{eqnarray*}\label{lemma4-1-14}
 | N(T,  y_0, \hat Q) - N(T, y_{0}, Q_n)|
  \leq C(\Omega,\omega,a) \varepsilon \big( \| y_0\| +\|\hat y_{0,T}\| \big),
\end{eqnarray*}
where $C(\Omega,\omega,a)$ is independent of $T\in [a,b]$.
This, along with the third conclusion in (\ref{lemma4-1-3}), leads to (\ref{theorem4-1-a}).

In summary, we conclude that if we can show (\ref{lemma4-1-85-1}), then the proof of Lemma~\ref{theorem4-1} is completed.

The proof of   (\ref{lemma4-1-85-1}) is as follows:
Arbitrarily fix $\lambda\in (0,1)$. Write
\begin{eqnarray}\label{0831-yd-decom}
  \hat y_d=\hat y_{d,1}+\hat y_{d,2}
  \;\;\mbox{with}\;\;
  \hat y_{d,1}\in E
  \;\;\mbox{and}\;\;
  \hat y_{d,2}\in E^\perp.
\end{eqnarray}
We divide the proof of (\ref{lemma4-1-85-1}) by two parts.

  \textit{Part 1. To show that (\ref{lemma4-1-85-1}) holds if
  $\lambda(\bar S - \{\hat y_{d,1}\}) \subset S_n - \{\hat y_{d,1}\}$}

Assume that $\lambda(\bar S - \{\hat y_{d,1}\}) \subset S_n - \{\hat y_{d,1}\}$. Since  $\hat Q = \bar S \oplus B_{E^\perp}$ and $Q_n = S_n \oplus B_{E^\perp}$ (see (\ref{theorem4-q-1}) and (\ref{wang3.2})), it follows that
\begin{equation}\label{T-8-28-2}
  P_E \big( \lambda(\hat Q - \{\hat y_d\}) \big) \subset P_E \big( Q_n - \{\hat y_d\} \big),
\end{equation}
where $P_{E}$ denotes the orthogonal projection from $L^2(\Omega)$ onto $E$.
Next, we claim that
\begin{equation}\label{T-8-28-3}
  P_{E^\perp} \big( \lambda(\hat Q - \{\hat y_d)\} \big) \subset P_{E^\perp} \big( Q_n - \{\hat y_d\} \big),
\end{equation}
where $P_{E^\perp}$ denotes the orthogonal projection from $L^2(\Omega)$ onto $E^\perp$.
Indeed, since $\hat Q
 = \bar S \oplus B_{E^\perp}$  and $Q_n = S_n \oplus B_{E^\perp}$ (see (\ref{theorem4-q-1}) and (\ref{wang3.2})), we find that in order to show (\ref{T-8-28-3}), it suffices to prove that
\begin{equation}\label{T-8-28-4}
  \lambda(B_{E^\perp} - \{\hat y_{d,2}\}) \subset B_{E^\perp} - \{\hat y_{d,2}\}.
\end{equation}
To prove (\ref{T-8-28-4}), we use  (\ref{lemma4-1-3}), (\ref{0815-th3.2-yd}) and (\ref{wang3.2}) to  get that $B_{r/2}(\hat y_d)\subset S_{N_0} \oplus B_{E^\perp}$. This, along with the definition of $\hat y_{d,2}$ (see (\ref{0831-yd-decom})), yields that  $\hat y_{d,2}$ belongs to the interior of $B_{E^\perp}$. From this, one can  directly check that (\ref{T-8-28-4}) holds. So (\ref{T-8-28-3}) is true.
 Now the conclusion in Part 1 follows from (\ref{T-8-28-2}) and (\ref{T-8-28-3}) at once.

  \textit{Part 2. To show that there exists $N_\lambda\geq N_0$ so that for each $n \geq N_\lambda$,
  \begin{eqnarray}\label{0815-appendix-1}
 \lambda (\bar S -\{\hat y_{d,1}\}) \subset S_n-\{\hat y_{d,1}\}
\end{eqnarray}}
$\quad~\,$By contradiction, we suppose that (\ref{0815-appendix-1}) were not true. Then there would be two sequences $\{n_k\}_{k=1}^{+\infty}$ and $\{z_k\}_{k=1}^{+\infty} \subset \bar S-\{\hat y_{d,1}\}$ so that
\begin{eqnarray*}\label{0815-appendix-2}
 \lambda z_k \not\in S_{n_k}-\{\hat y_{d,1}\}
 \;\;\mbox{for each}\;\; k.
\end{eqnarray*}
For each $k$, by the Hahn-Banach separation theorem,   there exists $f_k\in E^* \setminus \{0\}$, with $\|f_k\|_{E^*}=1$, so that
\begin{eqnarray}\label{0815-appendix-3}
 \langle f_k, z \rangle_{E^*,E} < \langle f_k, \lambda z_k \rangle_{E^*,E},\;\;\forall\; z\in  S_{n_k}-\{\hat y_{d,1}\}.
\end{eqnarray}

Next, by (\ref{theorem4-q-1}) and the definition of $\mathcal F$,  we obtain that $\bar S$  is bounded in $E$ and so is the sequence $\{z_k\}_{k=1}^{+\infty}$. Since $E$ is of finite dimension, there exists a subsequence of $\{(z_k,f_k)\}_{k=1}^ {+\infty}$, still denoted in the same manner,  so that
\begin{eqnarray}\label{0815-appendix-4}
 \hat z = \lim_{k\rightarrow +\infty} z_{k}
 \in \bar S- \{\hat y_{d,1}\}
 \;\;\mbox{and}\;\;
 \hat f=\lim_{k\rightarrow +\infty} f_k
 \;\;\mbox{in}\;\;
 E^*
\end{eqnarray}
for some $(\hat z,\hat f)\in E\times (E^*\setminus\{0\})$.
Since $S=\cup_{n=1}^{+\infty} S_{n}$ and $\{S_n\}$ is increasing, we see that
 for each $z\in S- \{\hat y_{d,1}\}$,
  there is $k_z\in\mathbb N^+$ so that
  $z\in S_{n_k}- \{\hat y_{d,1}\}$ for all $k\geq k_z$. Thus, by  (\ref{0815-appendix-4}) and (\ref{0815-appendix-3}), we find that for each $z\in S- \{\hat y_{d,1}\}$,
\begin{eqnarray*}
 \langle \hat f, z \rangle_{E^*,E}=\lim_{k\rightarrow +\infty} \langle f_k, z \rangle_{E^*,E}
 \leq \lim_{k\rightarrow +\infty}\langle f_k, \lambda z_k \rangle_{E^*,E}
 =\langle \hat f, \lambda \hat z \rangle_{E^*,E}.
\end{eqnarray*}
This yields that
\begin{eqnarray}\label{0815-appendix-5}
 \langle \hat f, z \rangle_{E^*,E} \leq \langle \hat f, \lambda \hat z \rangle_{E^*,E},\;\;\forall\; z\in  \bar S-\{\hat y_{d,1}\}.
\end{eqnarray}
Since $\hat z\in \bar S-\{\hat y_{d,1}\}$ (see  (\ref{0815-appendix-4})), by taking $z=\hat z$ in (\ref{0815-appendix-5}), we see that
\begin{eqnarray*}
 (1-\lambda) \langle\hat f,  \hat z \rangle_{E^*,E} \leq 0,
 \;\;\mbox{i.e.,}\;\;
 \langle\hat f,  \hat z \rangle_{E^*,E} \leq 0.
\end{eqnarray*}
This, as well as (\ref{0815-appendix-5}), yields that
\begin{eqnarray}\label{0815-appendix-6}
  \langle \hat f, z \rangle_{E^*,E} \leq \langle \hat f, \lambda \hat z \rangle_{E^*,E} \leq 0,\;\;\forall\; z\in  \bar S-\{\hat y_{d,1}\}.
\end{eqnarray}
Meanwhile, by (\ref{lemma4-1-3}), (\ref{0815-th3.2-yd}) and (\ref{wang3.2}), we get that $B_{r/2}(\hat y_d)\in S_{N_0}\oplus B_{E^\perp}\subset \bar S \oplus B_{E^\perp}$. From this and the definition of $\hat y_{d,1}$ (see (\ref{0831-yd-decom})), we see that  $\hat y_{d,1}$ belongs to the interior of $\bar S$.
 By this and (\ref{0815-appendix-6}), we find that $\hat f=0$ in $E^*$, which leads to a contradiction. Therefore, (\ref{0815-appendix-1}) is true. This ends the proof of Part 2.

 Finally, by the conclusions in Part 1 and Part 2, we obtain   (\ref{lemma4-1-85-1}). This ends  the proof of Lemma~\ref{theorem4-1}.
\end{proof}

 To construct the desired $Q \in \mathcal F$ and $y_0 \in L^2(\Omega) \setminus Q$  in Theorem \ref{theorem4-3}, we need the following result.

\begin{lemma}\label{lemma4-2}
Let
$
\alpha \geq 2.
$
Let
\begin{eqnarray}\label{lemma4-2-2}
h(x) &\triangleq& x^\alpha,\; x>0,\\
  \nonumber
  G_h \triangleq \{(x_1,x_2)\in\mathbb R^2: x_1>0, x_2 \geq h(x_1) \}
  &,&
  \partial G_h \triangleq \{(x,h(x))\in\mathbb R^2: x>0\}.
\end{eqnarray}
 Then there exist two disjoint closed disks $D_{1}, D_{2} \subset  G_h$ so that they are respectively tangent to $\partial G_h$ at points $p_1$ and $p_2$ and so that $ conv\big( D_{1}\cup D_{2})\cap\partial G_h=\{p_1,p_2\}$, where $conv\big( D_{1}\cup D_{2})$ denotes the convex hull of $D_{1}\cup D_{2}$.

 \end{lemma}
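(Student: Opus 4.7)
The plan is to exploit that $h(x)=x^\alpha$ with $\alpha\geq 2$ is $C^2$ and strictly convex on $(0,+\infty)$, since $h''(x)=\alpha(\alpha-1)x^{\alpha-2}>0$; hence $G_h$ is a closed convex set with strictly convex $C^2$ boundary. I fix any two distinct points $p_1=(a_1,h(a_1))$ and $p_2=(a_2,h(a_2))$ with $0<a_1<a_2$, let $n_i$ be the inward unit normal to $\partial G_h$ at $p_i$, and for $r>0$ define $D_i(r)$ as the closed disk of radius $r$ centered at $c_i(r)\triangleq p_i+r n_i$. By construction each $D_i(r)$ is tangent to $\partial G_h$ at $p_i$ from the $G_h$ side; the goal is to show that for all sufficiently small $r>0$, the disks $D_1\triangleq D_1(r)$ and $D_2\triangleq D_2(r)$ meet the requirements of the lemma.

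The proof reduces to three facts, valid for $r$ small: (a) $D_i(r)\subset G_h$ and $D_i(r)\cap\partial G_h=\{p_i\}$ for $i=1,2$; (b) $D_1(r)\cap D_2(r)=\emptyset$; and (c) $\mathrm{conv}\bigl(D_1(r)\cup D_2(r)\bigr)\cap\partial G_h=\{p_1,p_2\}$. Fact (a) is a standard ``reach'' property of strictly convex $C^2$ boundaries: the radius of curvature $R(a_i)=(1+h'(a_i)^2)^{3/2}/h''(a_i)$ is positive, and for $r<R(a_i)$ a second-order comparison with the osculating circle shows that $p_i$ is the unique closest point of $\partial G_h$ to $c_i(r)$; for $r$ small, the disk $D_i(r)$ lies in a small neighborhood of $p_i$, so the local analysis controls the whole disk. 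Fact (b) is immediate once $r<|p_1-p_2|/4$.

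The main step is (c). Since $G_h$ is convex, $\mathrm{conv}(D_1\cup D_2)\subset G_h$, so any $q\in\mathrm{conv}(D_1\cup D_2)\cap\partial G_h$ lies on $\partial\,\mathrm{conv}(D_1\cup D_2)$. The latter is the classical stadium shape: one arc of $\partial D_1$, one arc of $\partial D_2$, and the two common external tangent segments. If $q$ is on an arc of $\partial D_i$, then $q\in\partial D_i\cap\partial G_h\subset D_i\cap\partial G_h=\{p_i\}$ by (a), so $q=p_i$. If $q$ lies on an external tangent segment with endpoints $q_i^\pm\in\partial D_i$, I show the whole segment lies in $\mathrm{int}(G_h)$: strict convexity of $h$ plus the mean value theorem gives $h'(a_1)<(h(a_2)-h(a_1))/(a_2-a_1)<h'(a_2)$, and the slopes of both common external tangents to $D_1(r)\cup D_2(r)$ converge to this chord slope as $r\to 0$. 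Hence for small $r$ these tangent slopes differ from both $h'(a_1)$ and $h'(a_2)$, so no external tangent line coincides with the tangent to $\partial G_h$ at $p_1$ or $p_2$; this forces $q_i^\pm\neq p_i$, and then (a) yields $q_i^\pm\in D_i\setminus\partial G_h\subset\mathrm{int}(G_h)$. Convexity of $\mathrm{int}(G_h)$ gives each segment $[q_1^\pm,q_2^\pm]\subset\mathrm{int}(G_h)$, contradicting $q\in\partial G_h$. The hardest point will be the global verification of (a), i.e.\ that the tangent disk does not escape $G_h$ through a distant part of $\partial G_h$, but this is a standard fact for $C^2$ strictly convex boundaries, controlled by the positive radius of curvature at $p_i$ and the smoothness of $\partial G_h$ on a neighborhood of $\{a_1,a_2\}\subset(0,+\infty)$.
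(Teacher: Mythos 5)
Your proposal is correct, and its first half (tangent disks obtained from the finite curvature of the $C^2$ boundary at $p_1,p_2$, with smallness of $r$ ensuring containment, single-point contact, and disjointness) coincides with the paper's construction, which simply cites Gilbarg--Trudinger for the interior tangent ball. The main step, however, is argued along a genuinely different route. The paper never describes $\partial\,\mathrm{conv}(D_1\cup D_2)$: it writes an arbitrary point $p\in \mathrm{conv}(D_1\cup D_2)\cap\partial G_h$ as $p=\mu q_1+(1-\mu)q_2$ with $q_1\in D_1$, $q_2\in D_2$ (convexity of the two disks), and shows the \emph{open} segment between $q_1$ and $q_2$ misses the graph: strict convexity of $h$ when both $q_1,q_2$ lie on $\partial G_h$ (forcing $q_i=p_i$, distinct), and plain convexity with one strict inequality when, say, $q_1$ is off the graph. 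Hence $p\in\{q_1,q_2\}\subset(D_1\cup D_2)\cap\partial G_h=\{p_1,p_2\}$. You instead pass to the boundary of the capsule --- two arcs plus the two common external tangent segments --- and rule out the segments by a quantitative argument: the tangent slopes converge as $r\to 0$ to the chord slope, which by the mean value theorem and strict monotonicity of $h'$ is strictly between $h'(a_1)$ and $h'(a_2)$, so the segment endpoints differ from $p_1,p_2$ and the whole segment lies in $\mathrm{int}\,G_h$. Both arguments are sound (your preliminary reduction $q\in\partial\,\mathrm{conv}(D_1\cup D_2)$ is justified since $\mathrm{conv}(D_1\cup D_2)\subset G_h$ and graph points are not interior to $G_h$). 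What the paper's version buys is robustness and economy: it needs only the qualitative facts $D_i\subset G_h$, $D_i\cap\partial G_h=\{p_i\}$, $D_1\cap D_2=\emptyset$, with no smallness of $r$ beyond the disk construction itself, no equal radii, and no appeal to the (true but unproved-in-text) stadium description of the hull boundary; your version buys an explicit small-$r$ geometric picture at the cost of these extra verifications.
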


 \begin{proof}
 Arbitrarily fix two different points $p_1$ and $p_2$ on  $\partial G_h$. Because the curve $\partial G_h$ is  smooth and the curvature of $\partial G_h$  at $p_i$, with $i=1,2$, is finite (which follow from (\ref{lemma4-2-2}) at once), we can find two disjoint closed disks $D_1$ and $D_2$ in $G_h$ so that $D_i$  is tangent to $\partial G_h$ at $p_i$, with $i=1,2$ (see, for instance, the contexts on Pages 354-355 in \cite{Gilbarg}).
 From this, we obtain that
 \begin{eqnarray}\label{WWang3.16}
 D_1 \cap D_2 = \emptyset, \;\; \partial G_h\cap D_1=\{p_1\}\;\;\mbox{and}\;\;\partial G_h\cap D_2=\{p_2\}.
 \end{eqnarray}

  We claim that  $ conv \big( D_{1}\cup D_{2})\cap\partial G_h=\{p_1,p_2\}$.
 Indeed, it is clear that
 \begin{eqnarray}\label{wang3.16}
 \{p_1,p_2\} \subseteq conv \big( D_{1}\cup D_{2})\cap\partial G_h.
 \end{eqnarray}
 Arbitrarily fix $p$ in $ conv \big( D_{1}\cup D_{2})\cap \partial G_h$.
 Since $D_{1}$ and $D_{2}$ are convex, by the definition of $conv \big( D_{1}\cup D_{2})$, one can easily check that
  \begin{eqnarray}\label{wang3.17}
 p=\mu q_1+(1-\mu)q_2\;\;\mbox{for some}\;\; \mu\in [0,1],\; q_1\in D_1,\; q_2\in D_2.
 \end{eqnarray}
 We now show  that
  \begin{eqnarray}\label{wang3.18}
 \{\lambda q_1+(1-\lambda)q_2\; :\; \lambda\in (0,1)\} \cap \partial G_h=\emptyset.
 \end{eqnarray}
 Indeed, in the first case that  $q_1\in \partial G_h$ and $q_2\in \partial G_h$,
 since $q_1\neq q_2$ (which follows from (\ref{WWang3.16})),
 by  the strict convexity of $h$ (which follows from  (\ref{lemma4-2-2})
 at once), we obtain
 (\ref{wang3.18});
 In the second  case that either  $q_1\not\in \partial G_h$ or $q_2\not\in \partial G_h$, we can assume, without loss of generality, that $q_1\not\in \partial G_h$. Write $q_1\triangleq (a_1,b_1)$ and $q_2\triangleq (a_2,b_2)$. Since $q_1\in G_h\setminus\partial G_h$ and $q_2\in G_h$, it follows from the definition $G_h$ and $\partial G_h$ (see (\ref{lemma4-2-2})) that $b_1>h(a_1)$ and $b_2\geq h(a_2)$. This, along with the convexity of $h$, indicates that for each $\lambda\in(0,1)$,
 \begin{eqnarray*}
  \lambda b_1+(1-\lambda)b_2> \lambda h(a_1)+(1-\lambda)h(a_2)\geq h(\lambda a_1+(1-\lambda)a_2).
 \end{eqnarray*}
 This implies that for each $\lambda\in(0,1)$, $\lambda q_1+(1-\lambda)q_2\not\in\partial G_h$. Hence, (\ref{wang3.18}) holds in the second case. In summary, we conclude that (\ref{wang3.18}) is true.

 Finally,  since $p\in \partial G_h$, it follows by (\ref{wang3.17}) and (\ref{wang3.18}) that $p$ is either  $q_1$ or $q_2$, from which, we see that $p$ is in either $\partial G_h\cap D_1$ or $\partial G_h\cap D_2$. This, along with
 (\ref{WWang3.16}), yields that $p$ is either  $p_1$ or $p_2$, which, together with
 (\ref{wang3.16}), leads to that $ conv \big( D_{1}\cup D_{2})\cap\partial G_h=\{p_1,p_2\}$. Thus,
 we end the proof of this lemma.
 \end{proof}

We are now on the position to prove Theorem \ref{theorem4-3}.
\begin{proof}
Let $(w_0, \hat Q) \in L^2(\Omega) \times \mathcal F$.
 We say that \textit{the function $t\rightarrow N(t, w_0, \hat Q)$ holds the property $\mathcal \backsim_{t_1,t_2}^{s_1,s_2}$, where $0<s_1<t_1<s_2<t_2<+\infty$}, if
 \begin{eqnarray}\label{W-property}
 0 = N(t_2, w_0, \hat Q) \leq N(t_1, w_0, \hat Q)
< N(s_2, w_0, \hat Q)
< \inf_{0 < t \leq s_1}N(t, w_0, \hat Q).
\end{eqnarray}
(In plain language, (\ref{W-property}) means that the function $t \rightarrow N(t,y_0,\hat Q)$ grows like a wave ``$\backsim$".) This property plays an important role in this proof. We prove Theorem \ref{theorem4-3} by two steps as follows:

\textit{Step 1. To show that there is $Q\in\mathcal F$, $y_0\in L^2(\Omega)\setminus Q$ and $(\tau_1,T_1,\tau_2,T_2)$ (with $0<\tau_1<T_1<\tau_2<T_2<\infty$) so that   the function $t \rightarrow N(t,y_0,Q)$ holds the property $\backsim_{T_1,T_2}^{\tau_1,\tau_2}$, and so that  $N(t,y_0,Q)>0$ for each $t\in(0,T_2)$}

Let $\{\lambda_j\}_{j=1}^{+\infty}$ and $\{e_j\}_{j=1}^{+\infty}$
be given by Step 1 of the proof of Lemma~\ref{theorem4-1} (see (\ref{lemma4-1-25})).
The proof of Step 1 is divided into the following four substeps.

\textit{Substep 1.1. To show that there exists  $(y_0, Q_1) \in L^2(\Omega) \times \mathcal F$ and $(T_1, T_2) \in \mathbb R^+ \times \mathbb R^+$, with $ T_1 < T_2 $, so that
\begin{equation}\label{theorem4-6-21}
  y_0 \in L^2(\Omega) \setminus Q_1
\end{equation}
and so that
\begin{equation}\label{theorem4-6-21-1}
   \{e^{\Delta t}y_0~:~t \in [0,+\infty)\} \cap Q_1
 = \{e^{\Delta T_1}y_0,e^{\Delta T_2}y_0\}
\end{equation}}
$\quad~\,$First, since
$\lim_{n \rightarrow +\infty} \lambda_n = +\infty$, we can fix $k \in \mathbb N^+$ so that
\begin{equation}\label{theorem4-3-2}
  \alpha \triangleq \lambda_{k} / \lambda_1 \geq 2.
\end{equation}
Let $h, G_h$ and $\partial G_h$ be defined in Lemma \ref{lemma4-2}, where $\alpha$ is given by (\ref{theorem4-3-2}), i.e.,
\begin{eqnarray}\label{T-8-30-1}
 h(x) &\triangleq& x^\alpha,\; x>0,\\
  \nonumber
G_h \triangleq \{(x_1,x_2)\in\mathbb R^2: x_1>0, x_2 \geq h(x_1) \}
  &,&
  \partial G_h \triangleq \{(x,h(x))\in\mathbb R^2: x>0\}.
\end{eqnarray}
 Then, according to Lemma \ref{lemma4-2},  there exist two disjoint closed disks $D_1, D_2 \subset G_h$ so that $D_1$ and $D_2$ are respectively tangent to $\partial G_h$ at $p_1 \triangleq (a_1, a_1^\alpha) \in \mathbb R^2$ and $p_2 \triangleq (a_2, a_2^\alpha) \in \mathbb R^2$, with $a_1 > a_2$ and so that
\begin{equation}\label{T-8-25-1}
 conv(D_1 \cup D_2) \cap \partial G_h = \{p_1, p_2\}.
\end{equation}
Furthermore, we see from \cite[Theorem 1.1.10 on Page 6]{Schneider}  that
\begin{eqnarray}\label{WanG3.46}
\overline{conv (D_1\cup D_2)}=conv (D_1\cup D_2).
\end{eqnarray}

Let $E \triangleq span
\{e_1, e_k\}$. Write $E^\perp$ for its orthogonal subspace in $L^2(\Omega)$.
Define an isomorphism $\mathcal I_{E}: \mathbb R^2 \rightarrow E$ by
\begin{eqnarray}\label{T-8-22-1}
  \mathcal I_{E}(a,b)=a e_1 + be_k\;\;\mbox{for each}\;\; (a,b)\in \mathbb R^2.
\end{eqnarray}
Choose  $a_0 > a_1$ large enough so that
\begin{equation}\label{T-8-22-3}
 p_0 \triangleq (a_0, a_0^\alpha) \in \partial G_h \setminus conv(D_1 \cup D_2).
\end{equation}
 We define  $(y_0, Q_1) \in L^2(\Omega) \times \mathcal F$ and $(T_1, T_2) \in \mathbb R^+ \times \mathbb R^+$ in the following manner:
\begin{equation}\label{T-8-22-5}
  y_0 \triangleq \mathcal I_{E}(p_0),\;\;
  Q_1 \triangleq S \oplus B_{E^\perp},\;\;
  T_1 \triangleq  \frac{1}{\lambda_1}\ln(\frac{a_0}{a_1})
   \;\;\mbox{and}\;\;
  T_2 \triangleq \frac{1}{\lambda_1}\ln(\frac{a_0}{a_2}),
\end{equation}
where $S \triangleq \mathcal I_{E} \big(conv( D_1 \cup D_2)\big)$ and $B_{E^\perp}$ denotes the closed unit ball in $E^\perp$.

Now, we claim that the above-mentioned $(y_0, Q_1)$ and $(T_1, T_2)$ satisfy
 (\ref{theorem4-6-21}) and (\ref{theorem4-6-21-1}).
 To prove  (\ref{theorem4-6-21}), we observe from the first equality in (\ref{T-8-22-5}), (\ref{T-8-22-3}) and the definition of $S$ that
 \begin{eqnarray*}
 y_0=\mathcal I_{E}(p_0)\notin \mathcal I_{E}(conv(D_1\cap D_2))=S\;\;\mbox{and}\;\; y_0\in E.
 \end{eqnarray*}
 These, along with the definition of $Q_1$ (see (\ref{T-8-22-5})), lead to (\ref{theorem4-6-21}).

 To show (\ref{theorem4-6-21-1}), we use the definitions of $y_0,\, \mathcal I_{E},\, p_0,\, \alpha$ and $\partial G_h$ (see (\ref{T-8-22-5}),(\ref{T-8-22-1}),(\ref{T-8-22-3}),(\ref{theorem4-3-2}) and (\ref{T-8-30-1}), respectively) to find that
\begin{eqnarray}\label{T-8-22-7}
  \{e^{\Delta t}y_0~:~t \in [0,+\infty)\} &=& \{e^{-\lambda_1 t}a_0e_1 + e^{-\lambda_k t}a_0^\alpha e_k ~:~t \in [0,+\infty)\}\\
\nonumber
                                          &=& \{(e^{-\lambda_1 t}a_0)e_1 +
  (e^{-\lambda_1 t}a_0)^\alpha e_k ~:~t \in [0,+\infty)\}\\
\nonumber
                                          &\subset& \mathcal I_{E}(\partial G_h).
\end{eqnarray}
Meanwhile, by (\ref{T-8-22-5}), (\ref{T-8-25-1}), (\ref{T-8-22-1}) and (\ref{theorem4-3-2}), we can directly check that
\begin{eqnarray}\label{T-8-31-8}
   Q_1 \cap \mathcal I_{E}(\partial G_h)
   &=& \{\mathcal I_{E}(p_1), \mathcal I_{E}(p_2)\}
                                  = \{a_1e_1+a_1^{\alpha}e_k, a_2e_1+a_2^{\alpha}e_k\}
   \nonumber\\
                                 &=& \{(e^{-\lambda_1 T_1}a_0)e_1 +
  (e^{-\lambda_1 T_1}a_0)^\alpha e_k, (e^{-\lambda_1 T_2}a_0)e_1 +
  (e^{-\lambda_1 T_2}a_0)^\alpha e_k\}
  \nonumber\\
                                 &=& \{e^{\Delta T_1}y_0, e^{\Delta T_2}y_0\}.
\end{eqnarray}
This, along with (\ref{T-8-22-7}),
yields that $ \{e^{\Delta t}y_0~:~t \in [0,+\infty)\}\cap Q_1  \subset \{e^{\Delta T_1}y_0, e^{\Delta T_2}y_0\}$. The reverse is clear. Hence, (\ref{theorem4-6-21-1}) is true.

\textit{Substep 1.2. To show that the function $t\rightarrow N(t,y_0,Q_1)$ holds the property $\backsim_{T_1,T_2}^{\tau_1,\tau_2}$ for some  $(\tau_1,\tau_2) \in \mathbb R^+ \times \mathbb R^+$, with $\tau_1<T_1<\tau_2<T_2$, and satisfies that
\begin{eqnarray}\label{theorem4-3-16-2}
N(T_1,y_0,Q_1) = N(T_2,y_0,Q_1) = 0
\end{eqnarray}}
$\quad~\,$First, we verify (\ref{theorem4-3-16-2}).
 From (\ref{theorem4-6-21-1}), we see that
$\hat y(T_1;y_0,0)\in Q_1$ and $\hat y(T_2;y_0,0)\in Q_1$.
These, along with (\ref{I-1}), lead to  (\ref{theorem4-3-16-2}).

We next show the existence of the desired pair $(\tau_1,\tau_2)\in \mathbb R^+ \times \mathbb R^+$. Arbitrarily fix $\tau_2\in(T_1,T_2)$. It is clear that $N(\tau_2,y_0,Q_1)<\infty$ (see, for instance, Theorem~\ref{Lemma-exist-op-NP}). Since
$y_0 \in L^2(\Omega) \setminus Q_1$ (see (\ref{theorem4-6-21})), it follows by Theorem \ref{Theorem-Lip-NP} that $\lim_{t\rightarrow 0^+} N(t,y_0,Q_1 ) = +\infty$. Thus, there is $\tau_1 \in(0,T_1)$ so that
 \begin{equation}\label{T-8-31-7}
  N(\tau_2,y_0,Q_1)< \inf_{0 < t \leq \tau_1} N(t, y_0, Q_1).
\end{equation}
Meanwhile, it follows from (\ref{theorem4-6-21-1}) that $\hat y (\tau_2, y_0, 0)=e^{\Delta \tau_2}y_0 \notin Q_1$, we see from (\ref{I-1}) that $N(\tau_2,y_0,Q_1) > 0$. This, together with (\ref{T-8-31-7}), (\ref{theorem4-3-16-2}) and
the definition of the property $\backsim_{T_1,T_2}^{\tau_1,\tau_2}$ (see (\ref{W-property})), indicates that the function $ t \rightarrow N(t,y_0,Q_1)$ holds the property $\backsim_{T_1,T_2}^{\tau_1,\tau_2}$. Thus, we end the proof of Substep 1.2.

\textit{Substep 1.3. To show the existence of  $Q_2 \in \mathcal F$, with $Q_2 \subset Q_1$, so that $\{e^{\Delta t}y_0~:~t \in [0,+\infty)\} \cap Q_2 = \{e^{\Delta T_2}y_0\}$ and  the function $t \rightarrow N(t,y_0,Q_2)$ holds the property $\backsim_{T_1,T_2}^{\tau_1,\tau_2}$}

First of all, by Substep 1.2, we have that
 \begin{eqnarray}\label{160904-100}
  N(T_1, y_0, Q_1) < N(\tau_2,y_0,Q_1)< \inf_{0 < t \leq \tau_1} N(t, y_0, Q_1).
 \end{eqnarray}
$\quad~\,$
We will use some perturbation of $Q_1$ to construct $Q_2$. Let
$D_1\triangleq \{z \in \mathbb R^2:\; \|z-\hat z\|_{\mathbb R^2} \leq \hat r\}$
 (for some $\hat z \in \mathbb R^2$ and $\hat r > 0$) be the closed disk  given in the proof Substep 1.1.
For each $\alpha \in [0,1)$, we define
\begin{eqnarray}\label{T-8-30-3}
  &D_{\alpha}& \triangleq \{z \in \mathbb R^2:\; \|z-\hat z\|_{\mathbb R^2} \leq r_{\alpha} \triangleq \alpha \hat r\} \subset D_1,\\
  \nonumber
  &S_{\alpha}& \triangleq  \mathcal I_{E} \big(conv(D_{\alpha} \cup D_2 \big)
  \;\;\mbox{and}\;\;
  Q_{\alpha} \triangleq S_{\alpha} \oplus B_{E^\perp},
\end{eqnarray}
where $B_{E^\perp}$ denotes the closed unit ball in $E^\perp$. It follows from \cite[Theorem 1.1.10 on Page 6]{Schneider}
that for each $\alpha \in [0,1)$, $D_\alpha \cup D_2$ is closed in $\mathbb{R}^2$. This, along with  the definition of $\mathcal I_E$ (see (\ref{T-8-22-1})), yields that for each $\alpha \in [0,1)$, $S_\alpha$ is closed in $E$. Thus, by the definition of $\mathcal F$, one can check that $Q_\alpha \in \mathcal F$ for each $\alpha \in (0,1)$.

 Arbitrarily fix $\alpha \in [0,1)$.
 Since $Q_1 \triangleq S \oplus B_{E^\perp}$, where $S=\mathcal I_E \big(conv( D_1 \cup D_2)\big)$ (see (\ref{T-8-22-5})), we see from (\ref{T-8-30-3}) that
\begin{equation}\label{T-8-31-10}
  Q_{\alpha} \subset Q_1.
\end{equation}
 Because $D_1$ and $D_2$ are respectively tangent to $\partial G_h$ at $p_1 \in \mathbb R^2$ and $p_2 \in \mathbb R^2$ (see the proof of Substep 1.1), we deduce from (\ref{T-8-30-3}) that
\begin{equation}\label{T-8-31-11}
  p_1 \in D_1 \setminus D_{\alpha} \;\;\mbox{and}\;\; p_2 \in D_2.
\end{equation}
 By (\ref{T-8-31-8}),  we have that $\mathcal{I}_E(p_i)=e^{\Delta T_i}y_0$, $i=1,2$. This, along with
  the definition of $S_{\alpha}$ (see (\ref{T-8-30-3})) and (\ref{T-8-31-11}), yields that
\begin{eqnarray*}
  e^{\Delta T_1}y_0 =  \mathcal I_{E}(p_1) \notin S_{\alpha}
  \;\;\mbox{and}\;\;
  e^{\Delta T_2}y_0 = \mathcal I_{E}(p_2) \in S_{\alpha}.
\end{eqnarray*}
Then,  from the definition $Q_{\alpha}$ (see (\ref{T-8-30-3})), we find that  $e^{\Delta T_1}y_0 \notin Q_{\alpha}$ and $e^{\Delta T_2}y_0 \in Q_{\alpha}$.
These, along with (\ref{theorem4-6-21-1}) and (\ref{T-8-31-10}), yield that
\begin{equation}\label{T-8-2}
  \{e^{\Delta t}y_0~:~t \in [0,+\infty)\} \cap Q_{\alpha} = \{e^{\Delta T_2}y_0\}.
\end{equation}
Hence, we obtain  that  $\hat y(T_2, y_0, 0) \in Q_{\alpha}$ and $\hat y(T_1, y_0, 0) \notin Q_{\alpha} $.
From these and  (\ref{I-1}), we conclude   that
\begin{equation}\label{T-8-31-4}
   N(T_2, y_0, Q_{\alpha}) =0 < N(T_1, y_0, Q_{\alpha})\;\;\mbox{for each}\;\;\alpha\in [0,1).
\end{equation}

Next, we will use Lemma \ref{theorem4-1} to show that for each $t>0$, the map $\alpha\rightarrow N(t,y_0,Q_\alpha)$ is continuous at $\alpha=1$. For this purpose, we need to check that
\begin{eqnarray}\label{160904-1}
 \overline{\cup_{0\leq\alpha<1} S_{\alpha}}=S.
\end{eqnarray}
Since $\mathcal I_E$ is isometric from $\mathbb R^2$ onto $E$, from the definitions of $\{S_\alpha\}$ and $S$ (see (\ref{T-8-30-3}) and (\ref{T-8-22-5})), we see that to show (\ref{160904-1}), it suffices to prove that
\begin{eqnarray}\label{160904-2}
 \overline{\cup_{0\leq\alpha<1}  conv (D_{\alpha}\cup D_2)}=conv(D_1\cup D_2).
\end{eqnarray}
To show (\ref{160904-2}), we claim that
\begin{eqnarray}\label{L-9-6-1}
 \overline{ \cup_{0\leq\alpha<1}  conv (D_{\alpha}\cup D_2) } \subset
    conv (D_1\cup D_2);
\end{eqnarray}
and
\begin{eqnarray}\label{L-9-6-2}
 conv (D_1\cup D_2) \subset \overline{ \cup_{0\leq\alpha<1}  conv (D_{\alpha}\cup D_2) }.
\end{eqnarray}
To prove (\ref{L-9-6-1}), we first claim  that
\begin{eqnarray}\label{WanG3.43}
 \cup_{0\leq\alpha<1}  conv (D_{\alpha}\cup D_2) = conv (\cup_{0\leq\alpha<1} (D_{\alpha}\cup D_2)).
\end{eqnarray}
Indeed, on one hand, it is clear that
\begin{eqnarray}\label{160904-3}
  \cup_{0\leq\alpha<1}  conv (D_{\alpha}\cup D_2) \subset conv (\cup_{0\leq\alpha<1} (D_{\alpha}\cup D_2))\triangleq \mathcal A.
\end{eqnarray}
On the other hand,  for each $p \in \mathcal A$, there exists $N\in\mathbb N^+$, $\{p_i\}_{i=1}^N\subset \cup_{0\leq\alpha<1} (D_{\alpha}\cup D_2)$ and $\{\lambda_i\}_{i=1}^N \subset \mathbb R^+$ so that $p=\sum_{i=1}^N \lambda_i  p_i$ and $\sum_{i=1}^N \lambda_i=1$. Since $ p_i\in\cup_{0\leq\alpha<1} (D_{\alpha}\cup D_2)$ for each $i$, there exists $\alpha_i\in[0,1)$ so that $ p_i\in D_{\alpha_i}\cup D_2$. Let $\bar \alpha\triangleq \max\{\alpha_1,\cdots,\alpha_N\}$. Because $D_{\alpha_i}\subset D_{\bar\alpha}$ (see (\ref{T-8-30-3})), we find that $ p_i\in D_{\bar\alpha}$ for each $i$.
Thus, we get that $p=\sum_{i=1}^N \lambda_i  p_i \in conv (D_{\bar\alpha}\cup D_2))$. Hence, $\mathcal A \subset \cup_{0\leq\alpha<1}  conv (D_{\alpha}\cup D_2)$. This, along with  (\ref{160904-3}), leads to (\ref{WanG3.43}).

By (\ref{WanG3.43}) and the definitions of  $\{D_\alpha\}_{0\leq\alpha<1}$ and $D_1$ (see (\ref{T-8-30-3})), one can directly check that
\begin{eqnarray}\label{160904-4}
 \cup_{0\leq\alpha<1}  conv (D_{\alpha}\cup D_2) = conv ((\mbox{Int}\,D_{1})\cup D_2),
\end{eqnarray}
where $\mbox{Int}\,D_{1}$ is the interior of $D_1$.
From (\ref{160904-4}) and (\ref{WanG3.46}),
one can easily get (\ref{L-9-6-1}).

To show (\ref{L-9-6-2}), we arbitrarily fix $\hat p\in conv (D_1\cup D_2)$ and $\varepsilon>0$. Then  there exists $\hat N\in\mathbb N^+$, $\{\hat p_i\}_{i=1}^{\hat N}\subset D_1\cup D_2$ and $\{\hat \lambda_i\}_{i=1}^{\hat N} \subset \mathbb R^+$ so that $\hat p=\sum_{i=1}^{\hat N} \hat \lambda_i \hat p_i$ and $\sum_{i=1}^{\hat N} \hat \lambda_i=1$. Since $\hat p_i\in D_1\cup D_2$ for each $i$, we can find $\hat q_i \in (\mbox{Int}\,D_{1})\cup D_2$ so that $\|\hat p_i-\hat q_i\|_{\mathbb R^2} \leq \varepsilon$. Thus, we find that
\begin{eqnarray*}
 \|\hat p-  \sum_{i=1}^{\hat N} \hat \lambda_i \hat q_i\|_{\mathbb R^2} = \|\sum_{i=1}^{\hat N} \hat \lambda_i (\hat p_i-\hat q_i)\|_{\mathbb R^2}\leq \varepsilon.
\end{eqnarray*}
Since $\sum_{i=1}^{\hat N} \hat \lambda_i \hat q_i\in conv ((\mbox{Int}\,D_{1})\cup D_2)$ and $\varepsilon>0$ was arbitrarily fixed, the above yields that $\hat p\in \overline{ conv ((\mbox{Int}\,D_{1})\cup D_2) }$. Therefore, we have that
\begin{eqnarray*}
 conv (D_1\cup D_2)\subset \overline{ conv ((\mbox{Int}\,D_{1})\cup D_2) }.
\end{eqnarray*}
 This, along with (\ref{160904-4}), leads to (\ref{L-9-6-2}). Now,  (\ref{160904-2}) follows from (\ref{L-9-6-1}) and (\ref{L-9-6-2}) at once. Hence, (\ref{160904-1}) is true.

By (\ref{160904-1}), we can apply  Lemma \ref{theorem4-1} to  see that
 for each $t>0$, the map $\alpha\rightarrow N(t,y_0,Q_\alpha)$ is continuous at $\alpha=1$.
 From this  and (\ref{160904-100}), we find that there exists some $\alpha_0 \in [0, 1)$ so that
 \begin{equation}\label{T-8-31-5}
   N(T_1, y_0, Q_{\alpha_0}) < N(\tau_2,y_0,Q_{\alpha_0})< \inf_{0 < t \leq \tau_1} N(t, y_0, Q_1).
 \end{equation}
 We now deal with the last term on the right hand side of (\ref{T-8-31-5}).
 Arbitrarily fix $t\in (0,\tau_1]$.
 Since $Q_{\alpha_0} \subset Q_1$ (see (\ref{T-8-31-10})), we see that each admissible control to
$(NP)^{t,y_0}_{Q_{\alpha_0}}$ is also an admissible control for
$(NP)^{t,y_0}_{Q_1}$. Thus, $N(t, y_0, Q_1) \leq N(t, y_0, Q_{\alpha_0})$.
Hence, we have that
 \begin{eqnarray*}
   \inf_{0 < t \leq \tau_1} N(t, y_0, Q_1) &\leq& \inf_{0 < t \leq \tau_1} N(t, y_0, Q_{\alpha_0}).
 \end{eqnarray*}
 This, along with (\ref{T-8-31-5}) and (\ref{T-8-31-4}), yields that
 \begin{eqnarray*}
   0 = N(T_2, y_0, Q_{\alpha_0}) < N(T_1, y_0, Q_{\alpha_0}) < N(\tau_2,y_0,Q_{\alpha_0})< \inf_{0 < t \leq \tau_1} N(t, y_0, Q_{\alpha_0}),
 \end{eqnarray*}
from which and (\ref{W-property}), we see that the function $t \rightarrow N(t,y_0,Q_{\alpha_0})$ holds the property $\backsim_{T_1,T_2}^{\tau_1,\tau_2}$. This, along with (\ref{T-8-2}), leads to the conclusions in Substep 1.3, with $Q_2 = Q_{\alpha_0}$.

\vskip 5pt
\textit{Substep 1.4. To show the conclusions in Step 1
}

We will prove that $(y_0,Q_2)$ and $(\tau_1,T_1,\tau_2,T_2)$  satisfy the conclusions in Step 1.
 In Substep 1.3,
  we already proved that the function $t \rightarrow N(t,y_0,Q_2)$ holds the property $\backsim_{T_1,T_2}^{\tau_1,\tau_2}$. Thus,
  we only need to show that for each $t\in(0,T_2)$, $N(t,y_0,Q_2)>0$ . For this purpose, we arbitrarily fix $\hat t\in(0,T_2)$. By Substep 1.3, we have that $e^{\Delta \hat t}y_0\not\in Q_2$. This yields that $\hat y(\hat t;y_0,0)\not\in Q_2$, which, along with (\ref{I-1}), indicates that $N(\hat t,y_0,Q_2)>0$. This ends the proof of Step 1.

\textit{Step 2. To prove that the pair $(y_0,Q)$ in Step 1 verifies the conclusions (i) and (ii) in Theorem \ref{theorem4-3}}

Let  $(y_0,Q)$ and $(\tau_1,T_1,\tau_2,T_2)$  be given by Step 1. Since the function $t\rightarrow N(t,y_0,Q)$ holds the property $\backsim_{T_1,T_2}^{\tau_1,\tau_2}$, we can use (\ref{W-property}) to see that
\begin{eqnarray}\label{0902-property}
~~~~~~
\left\{\begin{array}{l}
 0 = N(T_2, y_0,  Q) \leq N(T_1, y_0,  Q)
< N(\tau_2, y_0,  Q)
< \inf_{0 < t \leq \tau_1}N(t, y_0,  Q),\\
0<\tau_1<T_1<\tau_2<T_2.
\end{array}
\right.
\end{eqnarray}
The conclusion (i) in Theorem \ref{theorem4-3} follows from (\ref{0902-property}) at once.

To show that $(y_0,Q)$ satisfies  the conclusion (ii) in Theorem \ref{theorem4-3}, we
define
\begin{eqnarray}\label{th3.1-s4-10}
 M_0 \triangleq   \inf_{\tau_1\leq t\leq \tau_2}N(t,y_0,Q).
\end{eqnarray}
We claim that there exists $\widehat T\in (\tau_1,\tau_2)$ so that
\begin{eqnarray}\label{th3.1-s4-2}
 \inf_{0< t\leq \tau_2} N(t,y_0,Q)=M_0=N(\hat T , y_0, Q)>0.
\end{eqnarray}
In fact, by (\ref{0902-property}), we have that
\begin{eqnarray*}
 \inf_{0< t\leq \tau_1}N(t,y_0,Q)>N(\tau_2,y_0,Q).
\end{eqnarray*}
From this and (\ref{th3.1-s4-10}), we get the first equality in (\ref{th3.1-s4-2}). To prove the second equality in (\ref{th3.1-s4-2}), we use Theorem \ref{Theorem-Lip-NP} to obtain that the function $t\rightarrow N(t,y_0,Q)$ is continuous over $[\tau_1,\tau_2]$. Thus, there exists $\widehat T\in [\tau_1,\tau_2]$ so that
\begin{eqnarray}\label{L-9-5-1}
 M_0=N(\hat T , y_0, Q).
\end{eqnarray}
Since $T_1\in(\tau_1,\tau_2)$ (see (\ref{0902-property})), we find from the above and the definition of $M_0$ (see  (\ref{th3.1-s4-10})) that $M_0\leq N(T_1,y_0,Q)$. By this and (\ref{0902-property}), we get that
\begin{eqnarray*}
  M_0 <\min\{N(\tau_1,y_0,Q), N(\tau_2,y_0,Q)\}.
\end{eqnarray*}
Thus, $\hat T \notin \{\tau_1, \tau_2\}$. This, along with (\ref{L-9-5-1}), yields the second equality in (\ref{th3.1-s4-2}). Finally, since $\widehat T<\tau_2$, it follows from (\ref{0902-property}) that $\widehat T<T_2$. From this and Step 1, we see that $N(\hat T , y_0, Q)>0$. In summary, we conclude that (\ref{th3.1-s4-2}) is true.

Next,  we see from (\ref{0902-property}) that $N(T_2, y_0, Q) = 0$. By this and the definition of $(\mathcal KN)_{y_0,Q}$ (see (\ref{def-KN})), we get that $(0,T_2)\in (\mathcal KN)_{y_0,Q}$, from which, it follows  that $(\mathcal KN)_{y_0,Q} \neq \emptyset$.

Finally, we show that $(\mathcal GT)_{y_0,Q}$ is not connected. For this purpose, we claim that
\begin{eqnarray}
 & & \inf \left\{T(M,y_0,Q)~:~M \geq M_0\right\}
 \leq \hat T<\tau_2; \label{th3.1-s4-4}
\\
 & & \tau_2
 \leq \inf \left\{T(M,y_0,Q)~:~0 \leq M < M_0 \right\}\leq T_2.
 \label{th3.1-s4-5}
\end{eqnarray}
To this end, by  (\ref{th3.1-s4-2}), we have that $N(\widehat T,y_0,Q)=M_0$. This, along with Theorem \ref{Theorem-dis-TP-NP}, indicates that for each $M\geq M_0$,
\begin{eqnarray*}
  T(M,y_0,Q) = \inf\{t\in\mathbb R^+~:~ N(t,y_0,Q)\leq M\} \leq \hat T.
\end{eqnarray*}
Since $\widehat T\in(\tau_1,\tau_2)$ (see (\ref{th3.1-s4-2})), the above leads to  (\ref{th3.1-s4-4}).
To show (\ref{th3.1-s4-5}), we find from (\ref{th3.1-s4-2}) that
\begin{eqnarray*}\label{th3.1-s4-11}
 0 < M_0 \leq N(t,y_0,Q)
 \;\;\mbox{for each}\;\; t\in(0,\tau_2].
\end{eqnarray*}
Then we see that for each $M\in[0,M_0)$,
\begin{eqnarray*}
 (0,\tau_2] \cap  \{t\in\mathbb R^+~:~ N(t,y_0,Q)\leq M\}
 =\emptyset.
\end{eqnarray*}
This, together with Theorem \ref{Theorem-dis-TP-NP}, yields that for each $M\in[0,M_0)$,
\begin{eqnarray}\label{th3.1-s4-12}
  T(M,y_0,Q) = \inf\{t\in\mathbb R^+~:~ N(t,y_0,Q)\leq M\} \geq \tau_2.
\end{eqnarray}
Meanwhile, by $N(T_2, y_0, Q) = 0 \leq M$ (see (\ref{0902-property})) and Theorem \ref{Theorem-dis-TP-NP}, we get that $T(M,y_0,Q) = \inf \mathcal J_M \leq T_2$. By this and (\ref{th3.1-s4-12}), we are led to (\ref{th3.1-s4-5}).

Now, we use (\ref{th3.1-s4-4}) and (\ref{th3.1-s4-5}) to prove that $(\mathcal GT)_{y_0,Q}$ is not connected. Let $\varepsilon\triangleq \frac{\tau_2-\hat T}{3}$.
 Define two sets in the following manner:
\begin{eqnarray*}
 \mathcal O_1\triangleq \big([0, +\infty)\times (0, \hat T+\varepsilon) \big) \cap (\mathcal GT)_{y_0,Q};
 \;\;
  \mathcal O_2 \triangleq \big( [0, +\infty) \times (\tau_2-\varepsilon,+\infty) \big) \cap (\mathcal GT)_{y_0,Q}.
\end{eqnarray*}
From these,  (\ref{th3.1-s4-4}), (\ref{th3.1-s4-5}) and the definition of $(\mathcal GT)_{y_0,Q}$ (see (\ref{def-GT})), one can easily check that
\begin{eqnarray}\label{WanG3.56}
 (\mathcal GT)_{y_0,Q}
 = \mathcal O_1\cup \mathcal O_2
 \;\;\mbox{and}\;\;
 \mathcal O_1\neq\emptyset,\,\mathcal O_2\neq\emptyset.
\end{eqnarray}
On the other hand, it is clear that $\mathcal O_1\cap \mathcal O_2=\emptyset$.
This, together with (\ref{WanG3.56}), indicates that $(\mathcal GT)_{y_0,Q}$ is not connected.

In summary, we conclude that the pair $(y_0,Q)$ satisfies the conclusion (i) and (ii)
in Theorem \ref{theorem4-3}. This completes the proof of Theorem \ref{theorem4-3}.
\end{proof}

\end{document}